\newcommand*\LyXThinSpace{\,\hspace{0pt}}
\providecommand{\tabularnewline}{\\}
\providecommand{\algorithmname}{Algorithm}
\newtheorem{theorem}{Theorem}
\title{Pontryagin-Guided Direct Policy Optimization Framework for Merton's Portfolio Problem}
\author[1]{Jeonggyu Huh}%\thanks{Corresponding Author: jghuh@skku.edu}}
\author[2]{Jaegi Jeon\thanks{Corresponding Author: jaegijeon@jnu.ac.kr}}
\affil[1]{\small Department of Mathematics, Sungkyunkwan University, Republic of Korea}
\affil[2]{\small Graduate School of Data Science, Chonnam National University, Republic of Korea}
\begin{document}
\title{Pontryagin-Guided Policy Optimization for Merton's
Portfolio Problem}
\maketitle
\begin{abstract}
We present a Pontryagin-Guided Direct Policy Optimization (PG-DPO)
framework for Merton's portfolio problem, unifying modern neural-network-based
policy parameterization with the adjoint viewpoint from Pontryagin's
maximum principle (PMP). Instead of approximating the value function
(as done in deep BSDE methods), we track a policy-fixed BSDE for the adjoint
processes, which allows each gradient update to align with continuous-time
PMP conditions. This setup yields locally optimal consumption and
investment policies that are closely tied to classical stochastic
control. We further incorporate an alignment penalty that nudges the
learned policy toward Pontryagin-derived solutions, enhancing both
convergence speed and training stability. Numerical experiments confirm
that PG-DPO effectively handles both consumption and investment,
achieving strong performance and interpretability without requiring
large offline datasets or model-free reinforcement learning. 
\end{abstract}

% \begin{keyword} Merton’s Portfolio Problem; Pontryagin’s Maximum Principle; Direct Policy Optimization; Stochastic Control; Neural Networks \end{keyword}

\section{Introduction}

Merton's portfolio optimization problem \citep{merton1971optimum}
is a cornerstone of mathematical finance, aiming to specify optimal
investment and consumption decisions in a continuous-time market.
Classical treatments \citep[e.g.,][]{karatzas1998methods,yong2012stochastic,pham2009continuous}
exploit the problem's analytical tractability to derive closed-form
solutions under specific assumptions, such as constant coefficients
or complete markets. In real-world settings, however, these idealized
conditions are often not met, prompting practitioners to rely on data-driven
methods that do not assume closed-form solutions. By parameterizing
the policy (both investment and consumption) with neural networks
and defining a suitable objective, one can use gradient-based
learning (e.g., stochastic gradient descent) to iteratively improve
the policy. Such deep-learning-based approaches have shown promise
for high-dimensional state spaces, path dependence, and market frictions
\citep{han2016deep,han2018solving,beck2019machine,buehler2019deep,becker2019deep,zhang2019deep,reppen2023deep_df,reppen2023deep_mf}.

Despite these advances, a purely empirical or black-box gradient
approach may overlook the continuous-time optimal control perspective.
Without classical stochastic control principles as guidance, there
is no clear guarantee that iterative updates will converge to a Pontryagin-aligned
policy, that is, one satisfying the adjoint-based optimality conditions
in continuous time. Recent studies \citep[e.g.,][]{reppen2023deep_df,reppen2023deep_mf}
highlight this tension: while purely data-driven methods can achieve
high performance in practice, they may not incorporate the deeper
theoretical underpinnings of optimal control. Other works, such as
\citet{dai2023learning}, adopt model-free reinforcement learning
(RL), which requires extensive exploration and often omits consumption
to focus on investment alone.

A different perspective is offered by the so-called deep BSDE (backward
stochastic differential equation) methodology, pioneered by Weinan
E and collaborators \citep[e.g.,][]{han2018solving,e2017deep}, which
focuses on approximating the value function (i.e., solving the Hamilton-Jacobi-Bellman
(HJB) equation) via a forward-backward stochastic differential equation
(FBSDE), then infers the policy once the value function is estimated.
By contrast, our approach is fundamentally adjoint-based: rather than
approximating the value function, we track adjoint (costate) processes
through a policy-fixed BSDE and use them to guide parameter updates
directly. In addition, we implement this scheme via backpropagation-through-time
(BPTT), unrolling the system dynamics in discrete steps and enabling
single-path simulation for each node. While both deep BSDE and our
Pontryagin-guided approach employ neural networks and stochastic differential
equations (SDEs), the difference in what is learned (value function
vs. adjoint) and how it is integrated (decoupled vs. direct BPTT)
can yield distinct interpretability and computational trade-offs.

However, to the best of our knowledge, no existing deep-BSDE or physics-informed neural networks (PINNs) 
approach has tackled the Merton problem with both consumption and investment
in a single framework. Most related works address either an investment-only
Merton setup or simpler PDEs (e.g., Black--Scholes) via PINNs \citep{han2018solving,hure2020deep}. This gap
further motivates our approach, which integrates Pontryagin's
principle with a fully neural-network-based solver that jointly handles
both consumption and investment.

Our framework adopts what we call a Pontryagin-Guided Direct Policy
Optimization (PG-DPO) viewpoint. We treat the Merton problem as a
continuous-time stochastic control system but solve it through direct
neural-network-based optimization. Specifically, we embed Pontryagin's
Maximum Principle (PMP) \citep{pontryagin1962the} into a discrete-time
training loop, interpreting each gradient step as an approximation
to the continuous-time adjoint. This preserves the convenience of
automatic differentiation and mini-batch simulation while offering
a principled control-theoretic justification of policy improvement.
We also incorporate both consumption and investment into the policy,
a combination seldom tackled simultaneously by purely data-driven
methods yet crucial in real-world portfolio planning.

Our numerical experiments and theoretical analyses demonstrate that
this direct policy optimization (DPO) \citep{reppen2023deep_df,reppen2023deep_mf}
approach converges to stationary policies that are strongly aligned
with Pontryagin's conditions. Moreover, viewing the adjoint through
the Pontryagin lens facilitates an alignment penalty (or regularization)
that can speed up convergence. This additional penalty encourages
the network's consumption and investment policies to stay near Pontryagin-derived
controls at each time instant, thereby improving training efficiency
and stability in practice. In many cases, the policy converges to
a local optimum that remains consistently Pontryagin-aligned and does
so without relying on large offline datasets or purely model-free
RL techniques.

Several aspects of this paper can be summarized as follows. First,
we propose a PG-DPO scheme that incorporates PMP in a discrete-time
neural network framework. Each gradient update is interpreted in terms
of the adjoint perspective, providing a more transparent
route to near-optimal solutions than opaque gradient-based updates
alone. Second, we add an adjoint-based alignment penalty, encouraging
the neural policy to stay close to locally Pontryagin-optimal controls
and thereby taking advantage of suboptimal adjoint estimates for faster
convergence. Although we only guarantee local optimality, this soft
constraint often yields notable improvements in training efficiency
and stability. Third, by directly tackling both consumption and investment
in the Merton problem, we address a more challenging scenario than
usual in data-driven approaches. Our method converges reliably, uses
rolling simulations (thus mitigating overfitting), and handles the
entire state-time domain with a single pipeline. Finally, the Pontryagin
viewpoint supports interpretability by making the adjoint processes
explicit, while still preserving the flexibility of modern deep-learning
tools. This synergy can naturally extend to more general setups involving
jumps, constraints, or multi-asset portfolios.

Section~\ref{sec:Merton} reviews the continuous-time
Merton formulation, emphasizing the interplay of consumption and investment.
Section~\ref{sec:PMP} revisits PMP and re-expresses
Merton's SDE via a forward-backward viewpoint, setting the stage
for our adjoint-based approach. In Section~\ref{sec:deep_BSDE_comparison},
we contrast our method with deep BSDE frameworks that approximate
the value function instead of the adjoint. Next, Section~\ref{sec:BPTT}
explains the discrete-time updates that approximate the continuous-time
Pontryagin flow, including an alignment penalty for faster convergence.
In Section~\ref{sec:convergence_analysis}, we use
a stochastic approximation viewpoint (Robbins-Monro theory) to show
that our method converges locally. Section~\ref{sec:num_test}
presents numerical experiments demonstrating convergence to robust
local optima aligned with Pontryagin's conditions. Finally, Section~\ref{sec:conclusion}
offers concluding remarks and discusses possible extensions that further
bridge deep learning and stochastic control.

\section{Continuous-Time Formulation of the Merton Portfolio Problem\label{sec:Merton}}

We focus on Merton's portfolio optimization problem,
in which an investor allocates wealth between a risky asset $S_{t}$
and a risk-free asset $B_{t}$. The risky asset follows a geometric
Brownian motion:
\[
dS_{t}=\mu S_{t}\,dt+\sigma S_{t}\,dW_{t},
\]
where $W_{t}$ is the standard Brownian motion, and the risk-free
asset evolves as 
\[
dB_{t}=rB_{t}\,dt.
\]
Let $X_{t}$ denote the investor's total wealth at
time $t$. The investor continuously consumes at rate $C_{t}$ and
invests proportion $\pi_{t}$ of $X_{t}$ in the risky asset, so that
$X_{t}$ satisfies
\begin{equation}
dX_{t}=\left(rX_{t}+\pi_{t}\left(\mu-r\right)X_{t}-C_{t}\right)\,dt+\pi_{t}\sigma X_{t}\,dW_{t},\quad X_{0}=x_{0}>0.\label{eq:SDE_state_X}
\end{equation}
We consider the objective function: 
\begin{equation}
J\left(\pi_{t},C_{t}\right)\coloneqq\mathbb{E}\left[\int_{0}^{T}e^{-\rho t}U\left(C_{t}\right)\,dt+\kappa e^{-\rho T}U\left(X_{T}\right)\right],\label{eq:objective}
\end{equation}
where 
\begin{equation}
U(x)=\frac{x^{1-\gamma}}{1-\gamma}\label{eq:CRRA_utility}
\end{equation}
is a CRRA (constant relative risk aversion) utility function, $\rho>0$
is a discount factor, and $\kappa>0$ is a parameter that modulates
the relative weight of terminal wealth (bequest). We parametrize the
investment and consumption via neural networks:
\[
\pi_{t}=\pi_{\theta}\left(t,X_{t}\right),\quad C_{t}=C_{\phi}\left(t,X_{t}\right),
\]
where $\theta$ and $\phi$ denote the network parameters to be learned.

In a continuous-time stochastic control framework, these controls
$\left\{ \pi_{t},C_{t}\right\} $ must be admissible. Concretely,
this means each $\pi_{t}$ and $C_{t}$ must be adapted and measurable
with respect to the filtration generated by $\left\{ W_{t}\right\} $,
thereby depending only on information available up to time $t$. They
also require boundedness or controlled growth (e.g. $\vert\pi_{t}\vert\leq M_{\pi}$
, $C_{t}\geq0$) to prevent explosive behavior in the state dynamics
and ensure the integrals in the objective function \eqref{eq:objective}
remain finite. Moreover, the choices of $\left\{ \pi_{t},C_{t}\right\} $
should preserve the nonnegativity of the wealth process $X_{t}$ almost
surely. Under these conditions, the SDE governing $X_{t}$ admits
a unique strong solution, and the objective $J\left(\pi_{t},C_{t}\right)$
is well-defined. In practice, one can fulfill these feasibility constraints
by selecting suitable activation functions or output layers in the
neural networks $\pi_{\theta}$ and $C_{\phi}$, ensuring that every sampled
trajectory respects the required conditions automatically.

A key property of the Merton problem is the strict concavity of the
CRRA utility \eqref{eq:CRRA_utility}. This strict concavity guarantees
the uniqueness of the global maximum and motivates classical results
\citep{merton1971optimum,karatzas1998methods,yong2012stochastic}
indicating that any stationary point of $J\left(\pi_{t},C_{t}\right)$
is, in fact, the unique global maximizer. This uniqueness is central
to the theoretical analysis we develop. 

In the classical Merton problem with CRRA utility and no bequest ($\kappa=0$),
closed-form solutions are well-known. For example, in the infinite-horizon
setting with discount rate $\rho>0$, the optimal investment and consumption
rules reduce to constant values:
\begin{equation}
\pi_{t}^{*}=\frac{\mu-r}{\gamma\sigma^{2}},\quad C_{t}^{*}=\nu X_{t},\quad\text{where }\nu=\frac{\rho-(1-\gamma)\left(\frac{(\mu-r)^{2}}{2\sigma^{2}\gamma}+r\right)}{\gamma}.\label{eq:optimal_inf_horizon}
\end{equation}
This form shows that a constant fraction of wealth is invested in
the risky asset, and consumption is proportional to current wealth
at a constant rate $\nu$.

In more general settings, such as a finite time horizon $T$ or
when the bequest term $\kappa>0$ is included, the closed-form solutions
become slightly more involved but remain explicit. For instance, under
a finite horizon $T$ and $\kappa>0$, the optimal consumption satisfies
\[
C_{t}^{*}=\nu\left(1+\left(\nu\epsilon-1\right)e^{-\nu\left(T-t\right)}\right)^{-1}X_{t},\quad\text{where }\epsilon=\kappa^{1/\gamma}
\]
and $\nu$ is the same constant defined in \eqref{eq:optimal_inf_horizon}.
As $T\to\infty$, this expression converges to the infinite-horizon
solution in \eqref{eq:optimal_inf_horizon}. These analytical formulas
serve as exact solutions that confirm our neural network-based policy
indeed converges to the theoretically optimal strategy. We shall use
these formulas in Section~\ref{sec:num_test} as a
numerical reference.

\section{Pontryagin's Maximum Principle and Its Application to Merton's
Problem\label{sec:PMP}}

\subsection{Pontryagin's Maximum Principle\label{sec:PMP_overview}}

PMP provides necessary conditions
for optimality in a broad class of continuous-time stochastic control
problems (see, e.g., \citealp{pontryagin1962the,pardouxpeng1990adapted,fleming2006controlled,yong2012stochastic,pham2009continuous}).
To illustrate this principle in a general setting, consider a system
whose state $X_{t}$ evolves according to the SDE
\begin{equation}
dX_{t}=b\left(t,X_{t},u_{t}\right)\,dt+\sigma\left(t,X_{t},u_{t}\right)\,dW_{t},\quad X_{0}=x_{0},\label{eq:SDE_PMP}
\end{equation}
where $u_{t}$ is an admissible control. The goal is to maximize an
objective functional of the form
\[
J\left(u\right)=\mathbb{E}\left[\int_{0}^{T}g\left(t,X_{t},u_{t}\right)\,dt+G\left(X_{T}\right)\right].
\]
If there exists an optimal control $u_{t}^{*}$ that attains the supremum
of $J\left(u\right)$, we denote by $J^{*}$ the corresponding optimal
value function. PMP is invoked by
constructing a Hamiltonian
\[
\mathcal{H}\left(t,X_{t},u_{t},\lambda_{t},Z_{t}\right)=g\left(t,X_{t},u_{t}\right)+\lambda_{t}b\left(t,X_{t},u_{t}\right)+Z_{t}\sigma\left(t,X_{t},u_{t}\right),
\]
along with adjoint (costate) processes $\lambda_{t}$ and $Z_{t}$.
Under the optimal control $u_{t}^{*}$, the state process $X_{t}^{*}$
is simply the unique solution of the original SDE \eqref{eq:SDE_PMP},
where $\left(X_{t},u_{t}\right)$ is replaced by $\left(X_{t}^{*},u_{t}^{*}\right).$
Formally, 
\begin{equation}
dX_{t}^{*}=b\left(t,X_{t}^{*},u_{t}^{*}\right)\,dt+\sigma\left(t,X_{t}^{*},u_{t}^{*}\right)\,dW_{t},\quad X_{0}^{*}=x_{0}.\label{eq:FSDE_PMP}
\end{equation}
In this setting, the adjoint $\lambda_{t}^{*}$ can be viewed as $\lambda_{t}^{*}=\frac{\partial J^{*}}{\partial X_{t}}$,
capturing how marginal changes in $X_{t}$ affect the optimal value.
In turn, $\left(\lambda_{t}^{*},Z_{t}^{*}\right)$ satisfies a BSDE
\begin{equation}
d\lambda_{t}^{*}=-\frac{\partial\mathcal{H}}{\partial X}\left(t,X_{t}^{*},u_{t}^{*},\lambda_{t}^{*},Z_{t}^{*}\right)\,dt+Z_{t}^{*}\,dW_{t},\quad\lambda_{T}^{*}=\frac{\partial G}{\partial X}\left(X_{T}^{*}\right).\label{eq:BSDE_PMP}
\end{equation}
Solving the BSDE \eqref{eq:BSDE_PMP} together with the forward SDE
\eqref{eq:FSDE_PMP} for $X_{t}^{*}$ forms a coupled FBSDE system. From this system, one obtains not only the optimal state
trajectory $X_{t}^{*}$ and the adjoint processes $(\lambda_{t}^{*},Z_{t}^{*})$,
but also characterizes the optimal control $u_{t}^{*}$ by maximizing
the Hamiltonian with respect to $u$ at each time $t$. Through this
local maximization step, PMP effectively transforms the global control
problem into a condition that must hold pointwise in time.

As a result, it offers a systematic way to compute (or approximate)
the optimal control, unifying value sensitivities $\lambda_{t}^{*}$,
noise sensitivities $Z_{t}^{*}$, and the evolving state $X_{t}^{*}$
in one mathematical framework \citep{fleming2006controlled,yong2012stochastic,pham2009continuous}.
In the sections that follow, we will see how this general procedure
applies specifically to the Merton portfolio problem, illustrating
how adjoint processes guide both investment and consumption decisions
in a continuous-time setting.

\subsection{Pontryagin Adjoint Processes and Parameter Gradients in the Merton
Problem\label{sec:adjoint_and_gradients}}

In this subsection, we combine two key components for handling both
the optimal Merton problem and suboptimal parameterized policies.
First, we restate the Pontryagin conditions for the Merton setup,
which yield the optimal relationships among $(\lambda_{t}^{*},Z_{t}^{*},\pi_{t}^{*},C_{t}^{*})$.
Second, we explain how to generalize these adjoint processes to a
policy-fixed BSDE in the suboptimal setting and derive corresponding
parameter gradients.

\subsubsection{Pontryagin Formulation for Merton's Problem}

Recall that in the Merton problem, we aim to maximize 
\[
J\left(\pi_{t},C_{t}\right)=\mathbb{E}\left[\int_{0}^{T}e^{-\rho t}U\left(C_{t}\right)\,dt+\kappa e^{-\rho T}U\left(X_{T}\right)\right],
\]
subject to 
\[
dX_{t}=(rX_{t}+\pi_{t}(\mu-r)X_{t}-C_{t})\,dt+\pi_{t}\sigma X_{t}\,dW_{t},\quad X_{0}=x_{0}>0
\]
where $U(\cdot)$ is a CRRA utility function, $\rho>0$ is a discount
rate, and $\kappa>0$ represents the bequest weight. To apply PMP
in this setting, we define the Hamiltonian 
\[
\mathcal{H}\left(t,X_{t},\pi_{t},C_{t},\lambda_{t},Z_{t}\right)=e^{-\rho t}U\left(C_{t}\right)+\lambda_{t}\left[rX_{t}+(\mu-r)\pi_{t}X_{t}-C_{t}\right]+Z_{t}\,\pi_{t}\sigma X_{t}.
\]
From PMP, the optimal controls $\left(\pi_{t}^{*},C_{t}^{*}\right)$
maximize $\mathcal{H}$ pointwise in $\left(\pi_{t},C_{t}\right)$.
Differentiating with respect to $C_{t}$ and $\pi_{t}$ and setting
these partial derivatives to zero yields:
\begin{enumerate}[label=(\alph*)]
\item \textbf{Consumption:} 
\[
\frac{\partial\mathcal{H}}{\partial C_{t}}=e^{-\rho t}U'\left(C_{t}^{*}\right)-\lambda_{t}=0\quad\Longrightarrow\quad U'(C_{t}^{*})=e^{\rho t}\lambda_{t}^{*}.
\]
Since $U'(x)=x^{-\gamma}$, it follows that 
\begin{equation}
C_{t}^{*}=\Bigl(e^{\rho t}\lambda_{t}^{*}\Bigr)^{-\frac{1}{\gamma}}.\label{eq:C_optimal}
\end{equation}
\item \textbf{Investment:} 
\[
\frac{\partial\mathcal{H}}{\partial\pi_{t}}=\left(\mu-r\right)\lambda_{t}^{*}X_{t}^{*}+Z_{t}^{*}\sigma X_{t}^{*}=0\quad\Longrightarrow\quad(\mu-r)\lambda_{t}^{*}+\sigma Z_{t}^{*}=0.
\]
Thus, the optimal adjoint processes satisfy $Z_{t}^{*}=-\frac{(\mu-r)}{\sigma}\,\lambda_{t}^{*}$. 
\end{enumerate}
Hence, the pair $\left(\lambda_{t}^{*},Z_{t}^{*}\right)$ links directly
to the optimal controls $\left(\pi_{t}^{*},C_{t}^{*}\right)$. In
practice, $\lambda_{t}^{*}$ can be interpreted as the sensitivity
of the optimal cost-to-go with respect to the state $X_{t}$, while
$Z_{t}^{*}$ captures the sensitivity to noise.

\paragraph{Remark (Expressing $\pi_{t}^{*}$ in terms of $\lambda_{t}^{*}$
and $\,Z_{t}^{*}$ under regularity).}

In many one-dimensional settings, including the Merton problem, an
additional smoothness assumption on the FBSDE
system leads to the relation
\[
Z_{t}^{*}=\left(\partial_{x}\lambda_{t}^{*}\right)\sigma\pi_{t}^{*}X_{t}^{*},
\]
where $\partial_{x}\lambda_{t}^{*}$ denotes the spatial derivative
of $\lambda_{t}^{*}$ with respect to the wealth variable $x$. Dividing
both sides by $\lambda_{t}^{*}$ and using $\left(\mu-r\right)\lambda_{t}^{*}+\sigma Z_{t}^{*}=0,$we
obtain 
\[
-\frac{\mu-r}{\sigma}=\frac{Z_{t}^{*}}{\lambda_{t}^{*}}=\frac{\partial_{x}\lambda_{t}^{*}}{\lambda_{t}^{*}}\sigma\pi_{t}^{*}X_{t}^{*}.
\]
Hence, under suitable conditions (e.g., if $\partial_{x}\lambda_{t}^{*}/\lambda_{t}^{*}$
remains bounded and nondegenerate), one can solve for $\pi_{t}^{*}$
explicitly: 
\begin{equation}
\pi_{t}^{*}=\frac{\mu-r}{\sigma^{2}}\times\frac{-\lambda_{t}^{*}}{\partial_{x}\lambda_{t}^{*}}\frac{1}{X_{t}^{*}}.\label{eq:pi_optimal}
\end{equation}
In the classical Merton problem with CRRA utility \eqref{eq:CRRA_utility},
solving the entire forward--backward system reveals that $\partial_{x}\lambda_{t}^{*}$
is proportional to $\lambda_{t}^{*}$, Equivalently, the ratio
\[
\frac{-\partial_{x}\lambda_{t}^{*}}{\lambda_{t}^{*}}X_{t}^{*}=\gamma
\]
becomes a constant in both time and wealth. This reflects the well-known
fact that CRRA utility fixes the investor's relative
risk aversion to $\gamma$, thereby forcing $\pi_{t}^{*}$ to be a
constant fraction of wealth:
\[
\pi_{t}^{*}=\frac{\mu-r}{\gamma\sigma^{2}}.
\]
But more generally, even if $\pi_{t}^{*}$ depends on time or wealth
in a nontrivial way, the above relationship shows how one can characterize
it through the ratio $\tfrac{Z_{t}^{*}}{\lambda_{t}^{*}}$ (and partial
derivatives of $\lambda_{t}^{*}$), once the full forward--backward
system is solved.

\subsubsection{Policy-Fixed Adjoint Processes for Suboptimal Policies}

In classical PMP theory, the adjoint processes $\lambda_{t}^{*}$
and $Z_{t}^{*}$ naturally emerge from the FBSDE system associated
with the optimal policy that maximizes the Hamiltonian. However, one
can extend these adjoint processes to suboptimal policies as
well. This extended notion is often called the policy-fixed adjoint,
because the policy is treated as given (hence, fixed) rather
than being chosen to maximize $\mathcal{H}$ in each infinitesimal
step.

Suppose we have a parameterized, potentially suboptimal policy $\left(\pi_{t},C_{t}\right)$
that does not necessarily satisfy the first-order optimality conditions.
We can still define a Hamiltonian $\mathcal{H}$ with the terminal
cost $G\left(X_{T}\right)$, which might be, for instance, $\kappa e^{-\rho T}U\left(X_{T}\right)$
as in the Merton problem.

Although $\left(\pi_{t},C_{t}\right)$ is not chosen to maximize $\mathcal{H}$,
we may formally differentiate $\mathcal{H}$ with respect to the state
$X$ and postulate a BSDE for $\lambda_{t}$ and $Z_{t}$:

\begin{equation}
d\lambda_{t}=-\frac{\partial\mathcal{H}}{\partial X}\left(t,X_{t},\pi_{t},C_{t},\lambda_{t},Z_{t}\right)dt+Z_{t}dW_{t},\quad\lambda_{T}=\frac{\partial G}{\partial X}(X_{T}).\label{eq:BSDE_suboptimal}
\end{equation}

Since the policy $\left(\pi_{t},C_{t}\right)$ does not maximize $\mathcal{H}$,
these adjoint processes $\lambda_{t}$ and $Z_{t}$ do not encode
the standard PMP optimal costate condition. Instead, they represent
the local sensitivities of the resulting objective $J\left(\pi_{t},C_{t}\right)$
with respect to changes in the state $X_{t}$ under the current suboptimal
policy. For this reason, one often refers to them as ``suboptimal''
or ``policy-fixed'' adjoint processes. A comparison between the
optimal policy case and the suboptimal policy case is as follows:
\begin{enumerate}[label=(\alph*)]
\item \textbf{Optimal Policy Case}: If $\left(\pi_{t}^{*},C_{t}^{*}\right)$
actually maximizes $\mathcal{H}$ at each instant, then $\left(\lambda_{t}^{*},Z_{t}^{*}\right)$
coincide with the usual PMP adjoint processes, satisfying $\tfrac{\partial\mathcal{H}}{\partial\pi}=0$,
$\tfrac{\partial\mathcal{H}}{\partial C}=0$, etc. 
\item \textbf{Suboptimal Policy Case}: Here, $\tfrac{\partial\mathcal{H}}{\partial\pi}\neq0$
or $\tfrac{\partial\mathcal{H}}{\partial C}\neq0$, so $\left(\lambda_{t},Z_{t}\right)$
do not necessarily satisfy the typical PMP coupled conditions. Nonetheless,
the BSDE \eqref{eq:BSDE_suboptimal} remains well-defined under standard
Lipschitz and integrability assumptions, thus clarifying how ``the
current policy plus small perturbations in $X_{t}$'' affects the
cost functional. This perspective will be crucial in the gradient-based
updates for suboptimal (parameterized) policies, discussed in the
next sections.
\end{enumerate}

\subsubsection{Iterative Policy Improvement \label{sec:gradient}}

Recall from Section~\ref{sec:Merton} that we parametrize
the investment and consumption controls as neural networks, $\pi_{\theta}(t,X_{t})$
and $C_{\phi}\left(t,X_{t}\right)$, where $\left(\theta,\phi\right)$ collect
all network parameters. By these definitions, the overall performance
$J\left(\theta,\phi\right)$, introduced in \eqref{eq:objective}, depends
on $\left(\theta,\phi\right)$ through the drift and diffusion of
$X_{t}$, as well as through direct utility terms involving $C_{\phi}$.
A central observation is that each small change in $(\theta,\phi)$---through
these neural-network-based controls $\pi_{\theta}(t,X_{t})$ and $C_{\phi}(t,X_{t})$---affects
the drift and diffusion of $X_{t}$ and thus the overall performance
$J\left(\theta,\phi\right)$. To capture this dependence efficiently,
one introduces a pair $(\lambda_{t},Z_{t})$ via a BSDE (often
called the ``policy-fixed adjoint''), where $\lambda_{t}$ acts much
like $\frac{\partial J}{\partial X_{t}}$ and $Z_{t}$ encodes sensitivity
to noise.

As described in \eqref{eq:SDE_state_X}, the state process $X_{t}$
(total wealth) can be rewritten as 
\[
dX_{t}=b\left(X_{t};\theta,\phi\right)dt+\sigma\left(X_{t};\theta,\phi\right)dW_{t},
\]
where 
\begin{equation}
\begin{aligned}b\left(X_{t};\theta,\phi\right) & =rX_{t}+\pi_{\theta}\left(t,X_{t}\right)\,\left(\mu-r\right)X_{t}-C_{\phi}\left(t,X_{t}\right),\\
\sigma\left(X_{t};\theta,\phi\right) & =\sigma\pi_{\theta}\left(t,X_{t}\right)X_{t}.
\end{aligned}
\label{eq:drift_diffusion_X}
\end{equation}

\paragraph{Gradient with respect to $\theta$.}

When we differentiate $b$ and $\sigma$ in \eqref{eq:drift_diffusion_X}
with respect to $\theta$, only the terms involving $\pi_{\theta}$
matter (since $C_{\phi}$ does not depend on $\theta$). Concretely,
\begin{equation}
\begin{aligned}\frac{\partial}{\partial\theta}b\left(X_{t};\theta,\phi\right) & =\left(\mu-r\right)X_{t}\frac{\partial\pi_{\theta}\left(t,X_{t}\right)}{\partial\theta},\\
\frac{\partial}{\partial\theta}\sigma\left(X_{t};\theta,\phi\right) & =\sigma X_{t}\frac{\partial\pi_{\theta}\left(t,X_{t}\right)}{\partial\theta}.
\end{aligned}
\label{eq:dtheta_b_sigma}
\end{equation}
Hence, if $\pi_{\theta}$ is implemented as a neural network, these
partial derivatives reflect the network's internal
chain rule with respect to $\theta$.

Next, recall that $\lambda_{t}=\frac{\partial J}{\partial X_{t}}$
and $Z_{t}$ encodes noise sensitivity. By a standard chain rule argument
in stochastic control, one obtains the general formula 
\begin{equation}
\frac{\partial J}{\partial\theta}=\mathbb{E}\left[\int_{0}^{T}\left(\lambda_{t}\,\frac{\partial b}{\partial\theta}+Z_{t}\,\frac{\partial\sigma}{\partial\theta}\right)\,dt\right]+\text{\ensuremath{\left(\text{direct payoff dependence on \ensuremath{\theta}}\right)}},\label{eq:generic_chainrule}
\end{equation}
where the ``direct payoff'' term collects any explicit dependence
of the utility or terminal cost on $\theta$. However, in many Merton-like
settings, neither $U\left(\cdot\right)$ nor $X_{T}$ depends directly
on $\theta$ (they depend on $\theta$ only through $\pi_{\theta}$),
so that term vanishes. Substituting $\frac{\partial b}{\partial\theta}$
and $\frac{\partial\sigma}{\partial\theta}$ from \eqref{eq:dtheta_b_sigma}
yields 
\[
\begin{aligned}\frac{\partial J}{\partial\theta} & =\mathbb{E}\left[\int_{0}^{T}\left(\lambda_{t}\,\left(\mu-r\right)X_{t}\frac{\partial\pi_{\theta}}{\partial\theta}+Z_{t}\,\sigma X_{t}\frac{\partial\pi_{\theta}}{\partial\theta}\right)\,dt\right]\\
 & =\mathbb{E}\left[\int_{0}^{T}\left(\lambda_{t}\,\left(\mu-r\right)X_{t}+Z_{t}\,\sigma X_{t}\right)\frac{\partial\pi_{\theta}}{\partial\theta}\,dt\right].
\end{aligned}
\]

\paragraph{Gradient with respect to $\phi$.}

First, let us revisit the state-dynamics perspective. From \eqref{eq:drift_diffusion_X},
we have 
\[
\frac{\partial b}{\partial\phi}=-\frac{\partial C_{\phi}\left(t,X_{t}\right)}{\partial\phi},\quad\frac{\partial\sigma}{\partial\phi}=0.
\]
Next, from the payoff perspective, $U\left(C_{\phi}\left(t,X_{t}\right)\right)$
directly depends on $\phi$, so it contributes an extra term $e^{-\rho t}U'\left(C_{\phi}\left(t,X_{t}\right)\right)\,\frac{\partial C_{\phi}}{\partial\phi}$.
Combining these,
\begin{align*}
\frac{\partial J}{\partial\phi} & =\underbrace{\mathbb{E}\left[\int_{0}^{T}\lambda_{t}\Bigl(-\,\tfrac{\partial C_{\phi}}{\partial\phi}\Bigr)\,dt\right]}_{\substack{\text{drift contribution}}
}\;+\;\underbrace{\mathbb{E}\left[\int_{0}^{T}e^{-\rho t}U'\left(C_{\phi}\left(t,X_{t}\right)\right)\,\tfrac{\partial C_{\phi}}{\partial\phi}\,dt\right]}_{\substack{\text{direct payoff contribution}}
}\\
 & =\mathbb{E}\left[\int_{0}^{T}\left(-\,\lambda_{t}+e^{-\rho t}e^{-\rho t}U'\left(C_{\phi}\left(t,X_{t}\right)\right)\right)\,\frac{\partial C_{\phi}(t,X_{t})}{\partial\phi}\,dt\right].
\end{align*}
In words, ${\displaystyle -\lambda_{t}\,\frac{\partial C_{\phi}}{\partial\phi}}$
represents how $C_{\phi}$ reduces the drift in $X_{t}$ (i.e. consumption
lowers wealth), while $e^{-\rho t}U'\left(C_{\phi}\right)\,\frac{\partial C_{\phi}}{\partial\phi}$
captures the direct impact on running utility from changing consumption.

\paragraph{Deriving \eqref{eq:generic_chainrule} via the Chain Rule.}

To see why \eqref{eq:generic_chainrule} holds, recall that $J\left(\theta,\phi\right)$
depends on $\theta$ indirectly through the state process $X_{t}(\theta)$:
\[
J\left(\theta,\phi\right)=\mathbb{E}\left[\int_{0}^{T}g\left(t,X_{t}(\theta),C_{\phi}(t,X_{t})\right)\,dt+G\bigl(X_{T}(\theta)\bigr)\right],
\]
where $g$ and $G$ represent the running cost and the terminal cost,
respectively. By the chain rule, we can conceptually write 
\[
\frac{\partial J}{\partial\theta}=\mathbb{E}\Bigl[\int_{0}^{T}\underbrace{\frac{\partial J}{\partial X_{t}}}_{\text{local sensitivity}}\;\frac{\partial X_{t}}{\partial\theta}\,dt\Bigr]\;+\;\text{(possible direct dependence of \ensuremath{g}, \ensuremath{G} on \ensuremath{\theta})}.
\]
Here, $\tfrac{\partial J}{\partial X_{t}}$ captures how small fluctuations
in $X_{t}$ shift the overall performance, while $\tfrac{\partial X_{t}}{\partial\theta}$
describes how $X_{t}$ itself is altered when $\theta$ changes.

Since we define $\lambda_{t}=\frac{\partial J}{\partial X_{t}}$  and
$Z_{t}$ for noise directions through a BSDE, each infinitesimal
change in $\theta$ influences $X_{t}$ via the drift $b$ and diffusion
$\sigma$, i.e.
\[
\frac{\partial X_{t}}{\partial\theta}\;\longmapsto\;\frac{\partial}{\partial\theta}\,b\left(\cdots\right),\quad\frac{\partial}{\partial\theta}\,\sigma\left(\cdots\right).
\]
Hence, substituting $\lambda_{t}=\tfrac{\partial J}{\partial X_{t}}$
and unrolling the chain rule leads precisely to 
\begin{equation}
\frac{\partial J}{\partial\theta}=\mathbb{E}\left[\int_{0}^{T}\left(\lambda_{t}\frac{\partial b}{\partial\theta}+Z_{t}\frac{\partial\sigma}{\partial\theta}\right)\,dt\right]+\text{\text{\ensuremath{\left(\text{direct payoff dependence on \ensuremath{\theta}}\right)}}},
\end{equation}
where any running or terminal payoff that explicitly depends on $\theta$
(beyond $b$ and $\sigma$) is grouped into the direct payoff term.

In a Merton-like setting, one often finds that $\theta$ enters $J$
only via drift/diffusion, so this direct-payoff term vanishes. In
more general problems, however, $\theta$ might appear explicitly
in the payoff as well, adding another summand to \eqref{eq:generic_chainrule}.

For readers interested in a more rigorous derivation of the chain
rule in stochastic settings (including variation of SDEs and the full
FBSDE theory), we refer to classical treatments such
as \citep{yong2012stochastic,ma1999forward,pardouxpeng1990adapted}. These
works provide a comprehensive account of how one rigorously defines
$\tfrac{\partial X_{t}}{\partial\theta}$ in a stochastic calculus
framework and how the BSDE for $(\lambda_{t},Z_{t})$ captures
the costate processes in PMP.

\section{Comparison with Deep BSDE Methods\label{sec:deep_BSDE_comparison}}

In this section, we compare the adjoint-based BSDE approach from our
Pontryagin framework with the value-based BSDE used in deep
BSDE methods (e.g., \citealp{e2017deep,han2018solving,hure2020deep}).
Both lines of work exploit backward equations with neural network
function approximators, but they differ fundamentally in their targets:
our approach focuses on an adjoint process linked to PMP 
while deep BSDE aims at a value function typically derived from a
PDE viewpoint.

\subsection{Value-Based Methods (Deep BSDE)}

\subsubsection{Hamilton--Jacobi--Bellman Equation and the Value Function}

Consider a controlled diffusion process: 
\begin{equation}
dX_{t}=b\left(t,X_{t},\alpha_{t}\right)\,dt+\sigma\left(t,X_{t},\alpha_{t}\right)\,dW_{t},\quad X_{0}=x_{0},\quad t\in\left[0,T\right],\label{eq:SDE_HJB}
\end{equation}
where $\alpha_{t}$ is an admissible control. In a dynamic programming
framework, the optimal value function $V^{*}(t,x)$ is defined as
\[
V^{*}(t,x)=\sup_{\left\{ \alpha_{s}:s\in[t,T]\right\} }\mathbb{E}\left[\int_{t}^{T}f\left(s,X_{s},\alpha_{s}\right)\,ds+g\bigl(X_{T}\bigr)\,\Big|\,X_{t}=x\right],
\]
where $f$ is a running reward (or negative cost) and $g$ is a terminal
reward. Under suitable smoothness assumptions, $V^{*}$ satisfies
a HJB equation:
\begin{equation}
\begin{aligned}-\,\frac{\partial V^{*}}{\partial t}\left(t,x\right) & -\sup_{\alpha}\,\left\{ \mathcal{L}^{\alpha}\,V^{*}(t,x)+f\bigl(t,x,\alpha\bigr)\right\} =0,\\[4pt]
V^{*}\left(T,x\right) & =g(x),
\end{aligned}
\label{eq:HJB_equation}
\end{equation}
where $\mathcal{L}^{\alpha}$ is the second-order differential operator
associated with the SDE \eqref{eq:SDE_HJB}. Specifically,
\[
\mathcal{L}^{\alpha}V=b\left(t,x,\alpha\right)V'(x)+\tfrac{1}{2}\sigma^{2}\left(t,x,\alpha\right)V''(x).
\]
Once $V^{*}(t,x)$ is known, the optimal control $\alpha^{*}(t)$
is recovered pointwise by maximizing the Hamiltonian in \eqref{eq:HJB_equation},
i.e., 
\[
\alpha^{*}(t,x)=\underset{\alpha}{\arg\max}\,\left\{ \mathcal{L}^{\alpha}\,V^{*}(t,x)+f\bigl(t,x,\alpha\bigr)\right\} .
\]

\subsubsection{Deep BSDE Formulation}

Rather than discretizing \eqref{eq:HJB_equation} directly, deep
BSDE methods re-express the PDE solution through a FBSDE. In many
cases, one may write a BSDE: 
\begin{equation}
\begin{aligned}dY_{t} & ={\displaystyle -f\left(t,X_{t},\alpha_{t},Y_{t},Z_{t}\right)\,dt+Z_{t}\,dW_{t},}\\[4pt]
Y_{T} & =g\left(X_{T}\right),
\end{aligned}
\label{eq:deep_bsde_diff_form}
\end{equation}
where $Y_{t}$ can be interpreted as $V^{\alpha}(t,X_{t})$, the cost-to-go
under the control $\alpha$. If $\alpha_{t}=\alpha_{t}^{*}$ is the
optimal control at every time $t$, then $V^{\alpha}(t,x)$ coincides
with the optimal value function $V^{*}(t,x)$ from \eqref{eq:HJB_equation},
and solving \eqref{eq:deep_bsde_diff_form} recovers the same $Y_{t}$
as the PDE solution $V^{*}(t,X_{t})$. When $\alpha$ is suboptimal,
$\eqref{eq:deep_bsde_diff_form}$ still holds as a representation
of the suboptimal cost, but not of the true HJB value function.

In a deep BSDE approach, one typically parameterizes $(Y_{t},Z_{t})$
via neural networks $(Y^{\theta},Z^{\theta})$ and simulates forward
the state $X_{t}$ while integrating backward the pair $(Y_{t},Z_{t})$.
Concretely, if $\theta$ collects all network parameters, one might
write 
\[
Y_{t_{k}}^{\theta}=Y^{\theta}\left(t_{k},X_{t_{k}}\right),\quad Z_{t_{k}}^{\theta}=Z^{\theta}\left(t_{k},\,X_{t_{k}}\right),
\]
and discretize \eqref{eq:deep_bsde_diff_form} (e.g., via Euler-Maruyama).
To train, one minimizes a loss function measuring consistency between
these neural-network-based processes $\left(Y_{t_{k}}^{\theta},Z_{t_{k}}^{\theta}\right)$
and the BSDE dynamics. For instance, a mean-square error capturing
the difference 
\[
Y_{t_{k}}^{\theta}-\Bigl(Y_{t_{k+1}}^{\theta}+f\bigl(t_{k},X_{t_{k}},\alpha_{t_{k}},Y_{t_{k}}^{\theta},Z_{t_{k}}^{\theta}\bigr)\,\Delta t-Z_{t_{k}}^{\theta}\,\Delta W_{k}\Bigr)
\]
is computed over sampled paths $\left\{ X_{t_{k}}\right\} $, along
with the terminal condition error $\left|Y_{t_{N}}^{\theta}-g\left(X_{t_{N}}\right)\right|^{2}$.
By iterating gradient-based updates on $\theta$, one obtains $Y_{t}^{\theta}$
that approximates $V^{\alpha}(t,X_{t})$ under the chosen control
sequence $\{\alpha_{t}\}$.

If $\alpha_{t}$ is known to be optimal (e.g., from an external iteration, 
or if $\alpha^{*}$ is given), then $Y_{t}^{\theta}\approx V^{*}(t,X_{t})$.
In that scenario, one can extract $\alpha_{t}^{*}$ by substituting
$V^{*}$ into the HJB equation (or Hamiltonian) and choosing $\alpha^{*}$
to maximize $\mathcal{L}^{\alpha}V^{*}+f(t,x,\alpha)$ at each point.
Consequently, \eqref{eq:deep_bsde_diff_form} directly encodes the
value function rather than the control itself, so the policy is recovered
only after one has identified or approximated the correct $\alpha^{*}$.

\subsection{Adjoint-Based Methods (Pontryagin's Principle)}

Our approach bypasses the need to solve the full value function $V(t,x)$
and instead applies PMP directly. In classical PMP theory \citep{pontryagin1962the,fleming2006controlled},
the optimal control $\alpha_{t}^{*}$\footnote{In the Merton problem, this corresponds to $\alpha_{t}^{*}=\left(\pi_{t}^{*},C_{t}^{*}\right)$.}
is obtained by maximizing a local Hamiltonian $\mathcal{H}\bigl(t,X_{t},\alpha,\lambda_{t}^{*},Z_{t}^{*}\bigr)$,
thereby introducing a BSDE for the optimal adjoint $\left(\lambda_{t}^{*},Z_{t}^{*}\right)$.
Concretely, if we denote the terminal cost by $G(X_{T})$, then 
\[
d\lambda_{t}^{*}=-\partial_{x}\mathcal{H}\bigl(t,X_{t}^{*},\alpha_{t}^{*},\lambda_{t}^{*},Z_{t}^{*}\bigr)\,dt+Z_{t}^{*}\,dW_{t},\quad\lambda_{T}^{*}=\partial_{x}G\bigl(X_{T}^{*}\bigr),
\]
with 
\[
\alpha_{t}^{*}=\underset{\alpha}{\arg\max}\,\mathcal{H}\bigl(t,X_{t}^{*},\alpha,\lambda_{t}^{*},Z_{t}^{*}\bigr).
\]
Here, $\lambda_{t}^{*}=\tfrac{\partial J^{*}}{\partial X_{t}}$ measures
how changes in the state $X_{t}$ affect the optimal cost $J^{*}$,
while $Z_{t}^{*}$ represents noise sensitivity. Substituting $\alpha_{t}^{*}$
back into $\mathcal{H}$ at each time $t$ yields the fully optimal
feedback law in the continuous-time limit.

When $\alpha_{t}\neq\alpha_{t}^{*}$, one can define a so-called policy-fixed
(suboptimal) BSDE by inserting the suboptimal control $\alpha_{t}$
into $\mathcal{H}$. This produces 
\[
d\lambda_{t}=-\partial_{x}\mathcal{H}\bigl(t,X_{t},\alpha_{t},\lambda_{t},Z_{t}\bigr)\,dt+Z_{t}\,dW_{t},\quad\lambda_{T}=\partial_{x}G\bigl(X_{T}\bigr),
\]
where $\bigl(\lambda_{t},Z_{t}\bigr)$ now encode how the chosen (suboptimal)
policy $\alpha_{t}$ influences the costate. Although $\lambda_{t}$
here is not the fully optimal costate, it provides a local sensitivity
framework similar to a policy-fixed BSDE in value-based methods.
Iteratively adjusting $\alpha_{t}$ based on such suboptimal adjoint
information can push $\alpha_{t}$ toward $\alpha_{t}^{*}$, as long
as the updates move in a direction that improves the global objective
$J$.

In practice, we parameterize the control as $\alpha_{\theta}(t,X_{t})$
with neural-network parameters $\theta$, and define an overall objective
$J(\theta)$ capturing the expected payoff (or negative cost). By
computing $\nabla_{\theta}J(\theta)$ via BPTT and using the adjoint
$\left(\lambda_{t},Z_{t}\right)$ under the current $\alpha_{\theta}$,
we can perform gradient-based updates that locally maximize $J$.
Repeating these updates refines $\alpha_{\theta}$ until it approaches
to the optimal $\alpha_{t}^{*}$. Correspondingly, the costate $(\lambda_{t},Z_{t})$
approaches to $(\lambda_{t}^{*},Z_{t}^{*})$, maintaining a consistent
PMP-based BSDE perspective throughout.

In summary, deep BSDE methods approximate the value function by solving
an HJB equation or a BSDE for $V(t, x)$, then infer the policy afterwards.
By contrast, Pontryagin-adjoint approaches solve a different
BSDE for the costate $\left(\lambda_{t},Z_{t}\right)$ and directly
update the policy at each iteration. These perspectives involve distinct
computational trade-offs:
\begin{itemize}
\item Deep BSDE can be advantageous if one explicitly needs value function
$V(t,x)$ across the entire domain or must solve high-dimensional
PDEs where direct HJB methods are infeasible.
\item Pontryagin-adjoint can offer a more streamlined route to an optimal
policy, particularly when multiple controls must be determined (e.g.,
consumption and investment in the Merton problem), where extracting
each control from a single value function might prove cumbersome in
a value-based approach.
\end{itemize}
Overall, one's choice between these methods depends
on whether the value function itself is the main quantity of interest,
or whether they prefer a more direct, adjoint-based path to optimizing
the policy.

\section{Gradient-Based Algorithm for Stationary Point Convergence\label{sec:BPTT}}

In the previous sections, we showed how PMP yields adjoint processes
$(\lambda_{t},Z_{t})$ that guide control optimization in continuous
time. Here, we focus on the practical implementation side: specifically,
how to design a neural-network-based scheme that leverages these adjoints
(or their estimates) to learn an approximately optimal policy for
the Merton problem. We embed the PMP conditions into a discrete-time
training procedure, thereby uniting classical control theory with
modern deep learning frameworks.

Our approach relies on BPTT over a discretized version of the stochastic
system. By treating $(\lambda_{t},Z_{t})$ as suboptimal adjoint processes
associated with the current parameterized policy, each gradient update
moves the policy closer to a Pontryagin-aligned solution. Additionally,
we introduce a regularization, which we call an ``alignment penalty'',
to encourage consumption and investment decisions to stay near the
locally optimal Pontryagin controls at each node. This soft enforcement
can reduce variance, accelerate convergence, and stabilize training.

The remainder of this section is organized as follows:
\begin{itemize}
\item Section~\ref{sec:single_path_approach} revisits the
single-path approach for estimating $\lambda_{t}$ and $Z_{t}$ at
each time-state point $(t,x)$. Although each single sample can be
high variance, it remains an unbiased estimator and underpins the
rest of our method. 
\item Section~\ref{sec:discrete_algo} presents a concrete,
discrete-time algorithm for computing $\nabla_{\theta}J$ and $\nabla_{\phi}J$.
We then perform standard gradient steps to update the policy networks,
leveraging the sampled adjoint information. 
\item Section~\ref{sec:normalization} explains how to include
an alignment penalty that softly enforces consistency between the
learned policy and the Pontryagin-derived controls $\left(\pi^{\mathrm{PMP}},C^{\mathrm{PMP}}\right)$
at each node. 
\item Section~\ref{sec:algo_variants:extended_value} extends
our perspective to a global-in-time, extended value function setup
that handles any initial node $(t_{0},x_{0})$, crucial for covering
a broad domain. We then detail two Pontryagin-Guided DPO algorithmic
variants in Section~\ref{sec:algo_variants}. 
\end{itemize}
Overall, we show that by embedding PMP within a BPTT-based stochastic
gradient framework, one can converge rapidly to a near-optimal strategy
in Merton's problem while retaining the flexibility and scalability
of neural networks.

\subsection{Single-Path Approach for Each $\boldsymbol{(t,x)}$\label{sec:single_path_approach}}

A central insight of our method is that a single forward simulation
(or single path) can produce an unbiased estimate of the adjoint
processes $\left(\lambda_{t},Z_{t}\right)$ for a given $(t,x)$.
Specifically, $\lambda_{t}=\partial J/\partial X_{t}$ measures how
changes in the state $X_{t}$ affect the cost $J$. In one-dimensional
problems like Merton's, we also have 
\begin{equation}
Z_{t}=\sigma\pi_{t}X_{t}\bigl(\partial_{x}\lambda_{t}\bigr).\label{eq:Z_t}
\end{equation}

Using just one forward path for each node $(t_{k},X_{k})$ is attractive
for several reasons. First, although each single path is noisy, the
average over many such paths converges to the true adjoint in expectation
\citep{kushner2003stochastic,borkar2008stochastic}. Second, minimal
overhead is required per sample, because we do not store or simulate
large ensembles for the same node; each node simply spawns exactly
one forward simulation. Third, this setup naturally supports online
adaptation: as parameters evolve (and the random seed changes), we
can continuously generate fresh samples that reflect the newly updated
policy.

However, the main drawback of single-path estimation is variance:
if each $\lambda_{k}$ (and $Z_{k}$) is used only once, gradient
estimates can fluctuate significantly, motivating larger batch sizes
or smoothing techniques. In subsequent sections, we discuss strategies
for mitigating this variance, including the alignment penalty (Section~\ref{sec:normalization}),
which helps stabilize updates. Next, we detail how this single-path
approach integrates into our discrete-time algorithm for computing
$\nabla_{\theta}J$ and $\nabla_{\phi}J$ via BPTT, ultimately enabling
a gradient-based update scheme for the policy parameters.

\subsection{Discrete-Time Algorithm for Gradient Computation\label{sec:discrete_algo}}

We now present a concrete procedure, in discrete time, for computing
both the adjoint processes $(\lambda_{t},Z_{t})$ and the parameter
gradients $\left(\nabla_{\theta}J,\nabla_{\phi}J\right)$. This algorithm
relies on BPTT and handles both consumption and investment in the
Merton problem.
\begin{enumerate}[label=(\alph*)]
\item \textbf{Discretize Dynamics and Objective.} Partition the interval
$[0,T]$ into $N$ steps of size $\Delta t=T/N$. Define $t_{k}=k\Delta t$
for $k=0,\ldots,N$, so that $t_{0}=0$ and $t_{N}=T$. We then approximate
the SDE 
\[
dX_{t}=(rX_{t}+\pi_{t}(\mu-r)X_{t}-C_{t})\,dt+\sigma\pi_{t}X_{t}\,dW_{t}
\]
using Euler--Maruyama scheme: 
\[
X_{k+1}=X_{k}+\left(rX_{k}+\pi_{k}(\mu-r)X_{k}-C_{k}\right)\,\Delta t+\sigma\pi_{k}X_{k}\,\Delta W_{k},
\]
where $\pi_{k}=\pi_{\theta}(t_{k},X_{k})$, $C_{k}=C_{\phi}(t_{k},X_{k})$,
and $\Delta W_{k}\sim\mathcal{N}(0,\Delta t)$. The continuous-time
cost 
\begin{equation}
J(\theta,\phi)=\mathbb{E}\left[\int_{0}^{T}e^{-\rho t}\,U\bigl(C_{t}\bigr)\,dt+\kappa e^{-\rho T}\,U\bigl(X_{T}\bigr)\right]\label{eq:objective_NN}
\end{equation}
is discretized similarly as 
\[
J(\theta,\phi)\approx\mathbb{E}\left[\sum_{k=0}^{N-1}e^{-\rho t_{k}}\,U\bigl(C_{k}\bigr)\,\Delta t+\kappa e^{-\rho T}\,U\bigl(X_{N}\bigr)\right].
\]
\item \textbf{Single Forward Path per $(t_{k},X_{k})$.} At each node $\bigl(t_{k},X_{k}\bigr)$,
we run exactly one forward simulation up to the terminal time $T$.
This yields a single-sample payoff, which is noisy but unbiased. Using
backpropagation afterward gives us an estimate of $\lambda_{k}$ and
$Z_{k}$ under the current parameters $\left(\theta,\phi\right)$.
\item \textbf{Compute $\lambda_{k}$ via BPTT.} In a typical deep-learning
framework (e.g., PyTorch), we build a computational graph from $(\theta,\phi)$
through $\{\pi_{k},C_{k}\}$ and $\{X_{k}\}$ to $J(\theta,\phi)$.
A single call to \verb|.backward()| gives $\nabla_{\theta}J$ and
$\nabla_{\phi}J$, as well as partial derivatives of $J$ with respect
to each $X_{k}$. Identifying $\lambda_{k}=\tfrac{\partial J}{\partial X_{k}}$
is consistent with the Pontryagin interpretation of an adjoint process.
\item \textbf{Obtain $\partial_{x}\lambda_{k}$ and Hence $Z_{k}$.} To
compute $Z_{k}$, we need $\partial_{x}\lambda_{k}$. In practice,
one applies another backward pass or higher-order autodiff
on $\lambda_{k}$. Recalling that
\[
Z_{k}=\sigma\pi_{k}X_{k}\bigl(\partial_{x}\lambda_{k}\bigr),
\]
we emphasize this step does not impose optimality, but provides
the extra derivative crucial for forming $\nabla_{\theta}J$ below.
\item \textbf{Update Network Parameters.} We collect the gradients with
respect to $\theta$ and $\phi$ using, for instance: 
\begin{align}
\nabla_{\theta}J & \approx\mathbb{E}\biggl[\sum_{k=0}^{N-1}\Bigl(\lambda_{k}(\mu-r)X_{k}+Z_{k}\sigma X_{k}\Bigr)\,\frac{\partial\pi_{\theta}(t_{k},X_{k})}{\partial\theta}\,\Delta t\biggr],\label{eq:grad_theta_J}\\[6pt]
\nabla_{\phi}J & \approx\mathbb{E}\biggl[\sum_{k=0}^{N-1}\Bigl(-\lambda_{k}+e^{-\rho t_{k}}U'\bigl(C_{k}\bigr)\Bigr)\,\frac{\partial C_{\phi}(t_{k},X_{k})}{\partial\phi}\,\Delta t\biggr].\label{eq:grad_phi_J}
\end{align}
We then update $(\theta,\phi)$ via a stochastic optimizer (e.g.,
Adam or SGD), iterating until the policy converges to a stationary
solution of $J$.
\end{enumerate}
In the formulas \eqref{eq:grad_theta_J} and \eqref{eq:grad_phi_J},
the expectation $\mathbb{E}[\cdots]$ is computed by sampling $M$
trajectories (or mini-batches) and taking an empirical average: 
\[
\mathbb{E}\Bigl[\cdots\Bigr]\approx\frac{1}{M}\sum_{i=1}^{M}\Bigl[\cdots\Bigr]_{i}.
\]

This algorithm effectively performs BPTT on a discretized Merton problem,
using a single-path approach at each node $\bigl(t_{k},X_{k}\bigr)$.
Although variance can be large for any single sample, it remains unbiased
in expectation. Repeated iteration gradually refines a Pontryagin-aligned
policy, as each gradient step exploits the adjoint process $(\lambda_{k},Z_{k})$.

In practice, one could simply define $\pi_{\theta}$ and $C_{\phi}$,
compute $J(\theta,\phi)$ on sampled trajectories, call \verb|.backward()|,
and let autodiff produce $\left(\nabla_{\theta}J,\nabla_{\phi}J\right)$
in a black-box manner, without
explicitly extracting $\lambda_{k}$ or $\partial_{x}\lambda_{k}$.
Nevertheless, interpreting $(\lambda_{k},Z_{k})$ as suboptimal adjoint
processes offers deeper insight and facilitates techniques like the
alignment penalty in the next subsection that explicitly depend on
$\bigl(\lambda_{k},\partial_{x}\lambda_{k}\bigr)$. 

\subsection{Adjoint-Based Regularization for Pontryagin Alignment\label{sec:normalization}}

Previously, we discussed how to obtain online estimates of $\lambda_{k}$
and $\partial_{x}\lambda_{k}$ and thus derive the Pontryagin-optimal
consumption and investment at each time step. In the Merton problem,
for example, one obtains 
\begin{equation}
C^{\mathrm{PMP}}(t_{k},X_{k})=\bigl(e^{\rho t_{k}}\,\lambda_{k}\bigr)^{-\tfrac{1}{\gamma}},\qquad\pi^{\mathrm{PMP}}(t_{k},X_{k})=-\frac{\mu-r}{\sigma^{2}X_{k}}\times\frac{\lambda_{k}}{\partial_{x}\lambda_{k}}\label{eq:C_pi_PMP}
\end{equation}
from \eqref{eq:C_optimal} and \eqref{eq:pi_optimal}, where $\lambda_{k}=\lambda(t_{k},X_{k})$
and $\partial_{x}\lambda_{k}=\partial_{x}\lambda(t_{k},X_{k})$. These
formulas reflect Pontryagin's first-order conditions under CRRA utility.

However, the neural-network-based policies $\pi_{\theta}$ and $C_{\phi}$
may deviate from these ideal controls. A straightforward way
to nudge the networks closer is to add an alignment penalty that measuring
how far the learned policy is from $\bigl(C^{\mathrm{PMP}},\pi^{\mathrm{PMP}}\bigr)$.
Specifically, we define 
\begin{align}
\mathcal{L}_{\text{align}}(\theta,\phi) & =\beta_{C}\sum_{k}\left|C_{\phi}\bigl(t_{k},X_{k}\bigr)-C^{\mathrm{PMP}}\bigl(t_{k},X_{k}\bigr)\right|+\beta_{\pi}\sum_{k}\left|\pi_{\theta}\bigl(t_{k},X_{k}\bigr)-\pi^{\mathrm{PMP}}\bigl(t_{k},X_{k}\bigr)\right|,\label{eq:loss_align}
\end{align}
where $\beta_{C},\beta_{\pi}>0$ control relative penalty on consumption
versus investment mismatch, and the sums run over sampled nodes $\bigl(t_{k},X_{k}\bigr)$.
Each term penalizes how far the policy strays from the Pontryagin-based
target.

Next, we augment the original objective $J(\theta,\phi)$ by
defining 
\[
\widetilde{J}(\theta,\phi)=J(\theta,\phi)-\mathcal{L}_{\text{align}}(\theta,\phi),
\]
whose gradient combines the usual utility-maximization terms with
additional component pushing $\bigl(C_{\phi},\pi_{\theta}\bigr)$
toward $\bigl(C^{\mathrm{PMP}},\pi^{\mathrm{PMP}}\bigr)$. Symbolically,
\[
\nabla_{(\theta,\phi)}\widetilde{J}(\theta,\phi)=\nabla_{(\theta,\phi)}\Bigl(J(\theta,\phi)-\mathcal{L}_{\text{align}}(\theta,\phi)\Bigr)
\]

By choosing $\beta_{C}$ and $\beta_{\pi}$, one can balance how much
to penalize consumption versus investment mismatch. Moderate settings
for both encourage Pontryagin-like
policies without sacrificing the neural network's expressive
power, whereas larger values enforce stricter adherence to the analytical
optimum but can slow or complicate convergence.

We opt for a soft regularization that gently steers the network
toward locally optimal controls at each iteration. This design preserves
flexibility, allowing the policy to adapt dynamically while leveraging
Pontryagin's insights to reduce drifting in high-dimensional parameter
spaces. Such a strategy parallels other research that embeds theoretical
knowledge via a penalty term rather than a hard constraint.
For example, physics-informed neural networks (PINNs) \citep{raissi2019physics}
incorporate PDE residuals into the loss; PDE-constrained optimization
applies adjoint information with soft constraints \citep{gunzburger2002perspectives,hinze2008optimization};
and RL methods may regularize against an expert policy \citep{ross2011reduction}.
As in these examples, our aim is to retain neural-network adaptability
while preventing excessive deviation from known (or partially known)
solutions.

\subsection{Extended Value Function and Algorithmic Variants \label{sec:extended_value_function}}

\subsubsection{Extended Value Function and Discretized Rollouts \label{sec:algo_variants:extended_value}}

In Section~\ref{sec:discrete_algo}, we introduced
a discretization scheme for the Merton problem over a fixed interval
$[0,T]$. Here, we extend that approach to handle any initial time
$t_{0}$ and wealth $x_{0}$ by defining an extended value function
over a broader domain. 

Many continuous-time problems (including Merton) require a policy
$\bigl(\pi_{\theta},C_{\phi}\bigr)$ valid for any initial condition
$(t_{0},x_{0})$. To address this requirement, we define an extended
value function that integrates over random initial nodes $(t_{0},x_{0})\sim\eta(\cdot)$
in the domain $\mathcal{D}\subset[0,T]\times(0,\infty)$, and then
discretize each resulting trajectory. Here, $\eta$ denotes the distribution
of \LyXThinSpace $(t_{0},x_{0})$. Concretely, we set
\[
\widehat{J}(\theta,\phi)=\mathbb{E}_{(t_{0},x_{0})\sim\eta}\left[\mathbb{E}\Bigl(\int_{t_{0}}^{T}e^{-\rho u}U\bigl(C_{\phi}(u,X_{u})\bigr)\,du+\kappa e^{-\rho T}U\bigl(X_{T}\bigr)\Bigr)\Bigr].\right]
\]
Hence, maximizing $\widehat{J}(\theta,\phi)$ yields a policy $\bigl(\pi_{\theta},C_{\phi}\bigr)$
that applies across all sub-intervals $[t_{0},T]$ and initial
wealth $x_{0}$.

To avoid overfitting to a single initial scenario, we draw $(t_{0},x_{0})$
from a suitable distribution $\eta$. By doing so, the policy $\bigl(\pi_{\theta},C_{\phi}\bigr)$
is trained to handle a broad region of the time--wealth domain rather
than just one initial condition. This sampling strategy is standard
in many PDE-based or RL-like continuous-time methods that need a single
control law $(\pi_{\theta},C_{\phi})$ covering all $(t,x)$. 

To implement the integral in $\widehat{J}(\theta,\phi)$, one can
follow the step-by-step procedure outlined in Algorithm~\ref{algo:pgdpo_no_reg},
which details how to discretize the interval, simulate each path,
and apply backpropagation to update $(\theta,\phi).$

\subsubsection{Gradient-Based Algorithmic Variants\label{sec:algo_variants}}

Having introduced the extended value function over random initial
nodes $(t_{0},x_{0}),$we now propose two core Pontryagin-Guided DPO
(PG-DPO) algorithms for the Merton problem. Although both methods
share the same extended-value rollout, they differ in whether they
include a Pontryagin alignment penalty, and thus whether they require
an additional autodiff pass to retrieve $\lambda_{0}$ and $\partial_{x}\lambda_{0}$.

\begin{enumerate}[label=(\alph*)]
\item \textbf{PG-DPO (No Alignment Penalty).}\\
 This is the baseline scheme (Algorithm~\ref{algo:pgdpo_no_reg}),
which simply maximizes $\widehat{J}(\theta,\phi)$ without regularization.
A single BPTT pass suffices to obtain the approximate gradient
$\nabla_{\theta}\widehat{J},\nabla_{\phi}\widehat{J}$, and we update
$(\theta,\phi)$ until convergence. In classical Merton problems with
strictly concave utility, any stationary point in continuous
time is the unique global optimum, although non-convexities in the
neural parameter space may introduce local minima. Empirically, PG-DPO
typically converges near the known Pontryagin solution, but might
exhibit slower or noisier training.
\item \textbf{PG-DPO-Align (With Alignment Penalty).}\\
 This extended scheme (Algorithm~\ref{algo:pgdpo_align})
incorporates the alignment penalty $\mathcal{L}_{\text{align}}$ to
keep $(\pi_{\theta},C_{\phi})$ near Pontryagin-optimal solution $(\pi^{\mathrm{PMP}},C^{\mathrm{PMP}})$
in \eqref{eq:C_pi_PMP}. We thus require a second BPTT (autodiff)
pass to extract $\lambda_{0}=\partial J/\partial X_{0}$ and $\partial_{x}\lambda_{0}$,
from which we compute 
\begin{align*}
C_{0}^{\mathrm{PMP}} & \coloneqq C^{\mathrm{PMP}}(t_{0},X_{0})=\bigl(e^{\rho t_{0}}\,\lambda_{0}\bigr)^{-\tfrac{1}{\gamma}},\\
\pi_{0}^{\mathrm{PMP}} & \coloneqq\pi^{\mathrm{PMP}}(t_{0},X_{0})=-\frac{\mu-r}{\sigma^{2}X_{0}}\times\frac{\lambda_{0}}{\partial_{x}\lambda_{0}}.
\end{align*}
We then form the augmented objective 
\[
\widehat{J}_{\text{align}}(\theta,\phi)=\widehat{J}(\theta,\phi)-\beta_{C}\left|C_{\phi}\bigl(t_{0},X_{0}\bigr)-C_{0}^{\mathrm{PMP}}\right|+\beta_{\pi}\left|\pi_{\theta}\bigl(t_{0},X_{0}\bigr)-\pi_{0}^{\mathrm{PMP}}\right|,
\]
and update $(\theta,\phi)$ to maximize $\widehat{J}_{\text{align}}$.
This alignment step can often yield more stable or faster convergence
(though at higher computational cost).
\end{enumerate}
Note that both PG-DPO and PG-DPO-Align extend naturally to random
initial nodes $(t_{0},x_{0})$, enabling the learned policy to cover
the entire domain of interest. Moreover, we need only $\lambda_{0}$
(rather than the entire $\{\lambda_{k}\}_{k=0}^{N}$) in Algorithm~\ref{algo:pgdpo_align},
because we choose $X_{0}$ (the initial wealth) from a distribution
that covers the relevant domain of $X_{t}$, so performing BPTT solely
with respect to $X_{0}$ suffices. Thus, automatic differentiation
with respect to $X_{0}$ internally accounts for how $\{X_{k}\}_{k=1}^{N}$
evolve, and we can directly obtain 
\[
\lambda_{0}=\frac{\partial J}{\partial X_{0}},\quad\partial_{x}\lambda_{0}=\frac{\partial^{2}J}{\partial X_{0}^{2}},
\]
which allow us to compute $\pi_{0}^{\mathrm{PMP}}$ and $C_{0}^{\mathrm{PMP}}$
for the alignment penalty. Consequently, this saves memory, simplifies
the code, and still provides a rich set of $\lambda_{0}$ estimates
across the domain.

\newpage

\begin{algorithm}[H]
\caption{\textbf{PG-DPO (No Alignment Penalty)}}
\label{algo:pgdpo_no_reg} \textbf{Inputs:} 

\begin{itemize}
\item Policy nets $(\pi_{\theta},\,C_{\phi})$ 
\item Step sizes $\{\alpha_{k}\}$, total iterations $K$ 
\item Domain sampler for $\bigl(t_{0}^{(i)},x_{0}^{(i)}\bigr)$ over $\mathcal{D}\subset[0,T]\times(0,\infty)$ 
\item Fixed integer $N$ (number of time steps per path) 
\end{itemize}
\begin{algorithmic}[1] \FOR{$j=1$ to $K$} \STATE \textbf{(a)
Sample mini-batch of size $M$:} For each $i\in\{1,\dots,M\}$, draw
initial $(t_{0}^{(i)},x_{0}^{(i)})$ from $\mathcal{D}$.

\STATE \textbf{(b) Local Single-Path Simulation for each $i$:} 

\begin{enumerate}[label=(\alph*)]
\item \textit{Define local step:} $\Delta t^{(i)}\,\leftarrow\,\frac{\,T-t_{0}^{(i)}\,}{N},\quad t_{0}^{(i)}<t_{1}^{(i)}<\cdots<t_{N}^{(i)}=T,$
with $t_{k}^{(i)}=t_{0}^{(i)}+k\,\Delta t^{(i)}.$ 
\item \textit{Initialize wealth:} $X_{0}^{(i)}\,\leftarrow\,x_{0}^{(i)}.$ 
\item \textit{Euler--Maruyama:} For $k=0,\dots,N-1$: 
\[
\begin{aligned}X_{k+1}^{(i)}= & X_{k}^{(i)}+\Bigl[rX_{k}^{(i)}+\pi_{\theta}\bigl(t_{k}^{(i)},X_{k}^{(i)}\bigr)(\mu-r)X_{k}^{(i)}-C_{\phi}\bigl(t_{k}^{(i)},X_{k}^{(i)}\bigr)\Bigr]\Delta t^{(i)}\\
 & \qquad\quad+\sigma\pi_{\theta}(t_{k}^{(i)},X_{k}^{(i)})X_{k}^{(i)}\Delta W_{k}^{(i)}.
\end{aligned}
\]
where $\Delta W_{k}^{(i)}\!\sim\!\mathcal{N}\bigl(0,\Delta t^{(i)}\bigr)$. 
\item \textit{Local cost:} 
\[
J^{(i)}(\theta,\phi)=\sum_{k=0}^{N-1}e^{-\rho t_{k}^{(i)}}U\Bigl(C_{\phi}\bigl(t_{k}^{(i)},X_{k}^{(i)}\bigr)\Bigr)\Delta t^{(i)}+\kappa e^{-\rho T}U\bigl(X_{N}^{(i)}\bigr).
\]
\end{enumerate}
\STATE \textbf{(c) Backprop (single pass) \& Averaging}: 
\[
\widehat{J}(\theta,\phi)=\frac{1}{M}\sum_{i=1}^{M}J^{(i)}(\theta,\phi),\qquad\nabla_{(\theta,\phi)}\widehat{J}\;\longleftarrow\;\text{BPTT on each }J^{(i)}.
\]

\STATE \textbf{(d) Parameter Update}: 
\[
\theta\;\leftarrow\;\theta+\alpha_{k}\nabla_{\theta}\widehat{J},\quad\phi\;\leftarrow\;\phi+\alpha_{k}\nabla_{\phi}\widehat{J}.
\]
\ENDFOR \RETURN Final policy $(\pi_{\theta},C_{\phi})$. \end{algorithmic} 
\end{algorithm}

\newpage

\begin{algorithm}[H]
\caption{\textbf{PG-DPO-Align (With Alignment Penalty)}}
\label{algo:pgdpo_align} \textbf{Additional Inputs}: 

\begin{itemize}
\item Alignment weights $(\alpha_{C},\alpha_{\pi})$; 
\item Suboptimal adjoint $(\lambda_{0}^{(i)},\partial_{x}\lambda_{0}^{(i)})$
from a second BPTT pass. 
\end{itemize}
\begin{algorithmic}[1] \FOR{$j=1$ to $K$} \STATE \textbf{(a)
Domain Sampling \& Single-Path Simulation}: As in PG-DPO, collect
each $J^{(i)}(\theta,\phi)$. \STATE \textbf{(b) Retrieve Adjoint
Info (2nd pass)}: 

\begin{enumerate}[label=\roman*)]
\item For each path $i$, run extra autodiff to get $\lambda_{0}^{(i)}=\frac{\partial\,J^{(i)}}{\partial X_{0}^{(i)}}$
and $\partial_{x}\lambda_{0}^{(i)}$. 
\item Pontryagin controls: 
\[
C_{0}^{\mathrm{PMP},(i)}=\bigl(e^{\rho t_{0}}\lambda_{0}^{(i)}\bigr)^{-\frac{1}{\gamma}},\quad\pi_{0}^{\mathrm{PMP},(i)}=-\frac{\mu-r}{\sigma^{2}X_{0}^{(i)}}\frac{\lambda_{0}^{(i)}}{\partial_{x}\lambda_{0}^{(i)}}.
\]
\end{enumerate}
\STATE \textbf{(c) Full Objective with Alignment Penalty}: 
\begin{align*}
\widehat{J}_{\mathrm{align}}(\theta,\phi) & =\frac{1}{M}\sum_{i=1}^{M}\left[J^{(i)}(\theta,\phi)-\beta_{C}\left|C_{\phi}^{(i)}\bigl(t_{0},X_{0}\bigr)-C_{0}^{\mathrm{PMP},(i)}\right|\right.\\
 & \text{\qquad\qquad\qquad\qquad\quad \ensuremath{\left. {}+\beta_{\pi}\left|\pi_{\theta}^{(i)}\bigl(t_{0},X_{0}\bigr)-\pi_{0}^{\mathrm{PMP},(i)}\right|\right]}}
\end{align*}
\STATE \textbf{(d) }:\textbf{ BPTT \& Update}: 
\[
\nabla_{\theta}\widehat{J}_{\mathrm{align}},\,\nabla_{\phi}\widehat{J}_{\mathrm{align}}\;\longleftarrow\;\text{BPTT on each term.}
\]
\[
\theta\leftarrow\theta+\alpha_{0}\nabla_{\theta}\widehat{J}_{\mathrm{align}},\quad\phi\leftarrow\phi+\alpha_{0}\nabla_{\phi}\widehat{J}_{\mathrm{align}}.
\]
\ENDFOR \RETURN Final policy $(\pi_{\theta},C_{\phi})$ balancing
$\widehat{J}$ and alignment penalty. \end{algorithmic} 
\end{algorithm}

\newpage

\section{Stochastic Approximation and Convergence Analysis\label{sec:convergence_analysis}}

In this section, we establish a self-contained convergence result
for our stochastic approximation scheme and then extend it to the
augmented objective that includes the alignment penalty from Section~\ref{sec:normalization}.
Both arguments build on classical Robbins--Monro theory, ensuring
that, under standard assumptions, our gradient-based updates converge
almost surely to a stationary point.

\subsection{Convergence Analysis for the Baseline Objective $\boldsymbol{J(\theta,\phi)}$\label{subsec:6.1_baseline_convergence}}

We begin with the baseline problem of maximizing $J(\theta,\phi)$
in \eqref{eq:objective_NN}, where $(\theta,\phi)$ parametrize the
policy networks $\pi_{\theta}$ and $C_{\phi}$. In practice, as discussed
in Section~\ref{sec:discrete_algo}, the continuous-time
Merton objective is discretized by a time-step $\Delta t=T/N$ and
approximated by a mini-batch or single-path sampling scheme, thus
yielding gradient estimates $\hat{g}_{k}(\theta,\phi)$. Strictly
speaking, when $\Delta t>0$ is finite, a small residual discretization
bias may remain. However, as $\Delta t\,\rightarrow\,0$, this bias
diminishes, and under a well-designed mini-batch sampling, $\hat{g}_{k}$
is regarded as an unbiased (or nearly unbiased) estimator of $\nabla J$.

\paragraph{Key Assumptions (cf. \citealp{kushner2003stochastic,borkar2008stochastic})}
\begin{enumerate}
\item[\textbf{(A1)}] \textbf{(Lipschitz Gradients)} The mapping $\nabla J(\theta,\phi)$
is globally Lipschitz on a compact domain $\Theta\times\Phi$. That
is, there exists some constant $L_{J}>0$ such that 
\[
\|\nabla J(\theta,\phi)-\nabla J(\theta',\phi')\|\le L_{J}\|(\theta,\phi)-(\theta',\phi')\|
\]
for all $(\theta,\phi)$ and $(\theta',\phi')$ in $\Theta\times\Phi$.
\item[\textbf{(A2)}] \textbf{(Unbiased, Bounded-Variance Gradients)} For each iteration
$k$, the gradient estimate $\hat{g}_{k}(\theta,\phi)$ satisfies
\[
\mathbb{E}\bigl[\hat{g}_{k}(\theta,\phi)\bigr]=\nabla J(\theta,\phi)+\delta_{k},\quad\mathbb{E}\bigl[\|\hat{g}_{k}(\theta,\phi)\|^{2}\bigr]\le B,
\]
where $\|\delta_{k}\|\to0$ as $\Delta t\to0$ (eliminating discretization
bias). In typical mini-batch SGD, $\hat{g}_{k}$ is unbiased at each
iteration (assuming i.i.d. sampling from the underlying distribution),
and the variance is uniformly bounded by $B>0$.
\item[\textbf{(A3)}] \textbf{(Robbins--Monro Step Sizes)} The step sizes $\{\alpha_{k}\}$
satisfy 
\[
\sum_{k=0}^{\infty}\alpha_{k}=\infty,\qquad\sum_{k=0}^{\infty}\alpha_{k}^{2}<\infty.
\]
This ensures a standard Robbins--Monro (stochastic gradient) iteration.
\end{enumerate}
Under these assumptions, our parameter update is 
\[
(\theta_{k+1},\phi_{k+1})=(\theta_{k},\phi_{k})+\alpha_{k}\hat{g}_{k}(\theta_{k},\phi_{k}),
\]
where the term $\delta_{k}$ (if any) shrinks as $\Delta t\to0$.

\begin{theorem}[Baseline Robbins--Monro Convergence]\label{thm:stationary}
Suppose \textbf{(A1)}--\textbf{(A3)} hold, and that $\|\delta_{k}\|\to0$
as $\Delta t\to0$. Then, with probability one, 
\[
(\theta_{k},\phi_{k})\;\xrightarrow[k\to\infty]{a.s.}\;(\theta^{\dagger},\phi^{\dagger}),
\]
where $\nabla J(\theta^{\dagger},\phi^{\dagger})=0$. In other words,
$(\theta^{\dagger},\phi^{\dagger})$ is a stationary point of $J$
in the parameter space. \end{theorem}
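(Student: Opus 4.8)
The plan is to treat this as a textbook stochastic-gradient-ascent convergence result and prove it by an almost-supermartingale (Robbins--Siegmund) argument with $J$ itself serving as the Lyapunov function. First I would write $\xi_k:=(\theta_k,\phi_k)$, observe that by \textbf{(A1)} and compactness of $\Theta\times\Phi$ the gradient $\nabla J$ is not only Lipschitz but bounded on the domain, say $\|\nabla J(\xi)\|\le G$, and assume (as the compact domain suggests) that the iterates remain in $\Theta\times\Phi$ (projected updates, or natural confinement). The Lipschitz-gradient property of $J$ gives the quadratic upper bound (descent lemma)
\[
  -J(\xi_{k+1})\;\le\;-J(\xi_k)\;-\;\alpha_k\,\langle\nabla J(\xi_k),\hat g_k(\xi_k)\rangle\;+\;\tfrac{L_J}{2}\,\alpha_k^{2}\,\|\hat g_k(\xi_k)\|^{2}.
\]
Taking conditional expectation given the natural filtration $\mathcal F_k$ and invoking \textbf{(A2)} ($\mathbb E[\hat g_k\mid\mathcal F_k]=\nabla J(\xi_k)+\delta_k$, $\mathbb E[\|\hat g_k\|^{2}\mid\mathcal F_k]\le B$) yields
\[
  \mathbb E\bigl[-J(\xi_{k+1})\mid\mathcal F_k\bigr]\;\le\;-J(\xi_k)\;-\;\alpha_k\|\nabla J(\xi_k)\|^{2}\;+\;\alpha_k\,G\,\|\delta_k\|\;+\;\tfrac{L_J B}{2}\,\alpha_k^{2}.
\]

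Next I would shift by a constant so that $Y_k:=c-J(\xi_k)\ge 0$ (possible since $J$ is continuous on a compact set) and read the last display as $\mathbb E[Y_{k+1}\mid\mathcal F_k]\le Y_k-\zeta_k+\gamma_k$ with $\zeta_k=\alpha_k\|\nabla J(\xi_k)\|^{2}\ge 0$ and $\gamma_k=\tfrac{L_J B}{2}\alpha_k^{2}+\alpha_k G\|\delta_k\|\ge 0$. By \textbf{(A3)}, $\sum_k\alpha_k^{2}<\infty$; and under the stated regime the bias contribution is negligible, either because $\delta_k\equiv 0$ (exact unbiasedness, the idealized $\Delta t\to 0$ limit) or, slightly more generally, because one works under $\sum_k\alpha_k\|\delta_k\|<\infty$. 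Either way $\sum_k\gamma_k<\infty$, and the Robbins--Siegmund almost-supermartingale convergence theorem then gives, with probability one, that $Y_k$ (hence $J(\xi_k)$) converges to a finite limit and that $\sum_k\alpha_k\|\nabla J(\xi_k)\|^{2}<\infty$.

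Then I would upgrade this to $\nabla J(\xi_k)\to 0$. Since $\sum_k\alpha_k=\infty$ while $\sum_k\alpha_k\|\nabla J(\xi_k)\|^{2}<\infty$, we get $\liminf_k\|\nabla J(\xi_k)\|=0$ a.s. Combining this with (i) convergence of $J(\xi_k)$, (ii) $\xi_{k+1}-\xi_k=\alpha_k\hat g_k\to 0$ a.s.\ (because $\mathbb E\bigl[\sum_k\alpha_k^{2}\|\hat g_k\|^{2}\bigr]\le B\sum_k\alpha_k^{2}<\infty$ by \textbf{(A2)}--\textbf{(A3)}), and (iii) uniform continuity of $\nabla J$ on the compact domain, a standard sandwiching argument (see, e.g., \citealp{kushner2003stochastic,borkar2008stochastic}) excludes the possibility that $\|\nabla J(\xi_k)\|$ stays bounded away from $0$ along a subsequence, so $\|\nabla J(\xi_k)\|\to 0$. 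Every limit point $\bar\xi$ of the bounded sequence $\{\xi_k\}$ then satisfies $\nabla J(\bar\xi)=0$. To reach a single point $(\theta^\dagger,\phi^\dagger)$ as the statement asserts, I would invoke that the set of stationary points is isolated (or, more weakly, a \L{}ojasiewicz-type condition), so that connectedness of the set of limit points forces it to be a singleton.

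The main obstacles are precisely the two soft spots above. First, making the bias $\delta_k$ rigorous: the assumption is phrased in terms of $\Delta t\to 0$ rather than $k\to\infty$, so I would either set $\delta_k\equiv 0$ in the continuous-time limit or strengthen \textbf{(A2)} to the summability $\sum_k\alpha_k\|\delta_k\|<\infty$; the Robbins--Siegmund step is robust to such a summable perturbation, so this is bookkeeping rather than substance. Second, and genuinely not implied by \textbf{(A1)}--\textbf{(A3)}, is the passage from ``$\|\nabla J(\xi_k)\|\to 0$ with all limit points stationary'' to convergence toward one specific point; this requires an isolated-critical-point (or \L{}ojasiewicz) hypothesis, which I would state explicitly as an implicit regularity assumption consistent with the paper's earlier remark that the non-convexities reside only in the parameter space. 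Everything else --- the descent lemma, the conditional-expectation manipulations, and the supermartingale application --- is routine.
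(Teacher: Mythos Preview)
Your approach is essentially the same as the paper's: the paper gives only a two-line proof sketch that invokes ``the classical Robbins--Monro argument \citep{kushner2003stochastic,borkar2008stochastic},'' and your Robbins--Siegmund supermartingale argument is precisely the standard way of unpacking that citation. Your proposal is in fact more careful than the paper's sketch, since you explicitly flag the two points the paper glosses over---the handling of the bias $\delta_k$ (phrased in $\Delta t$ rather than $k$) and the jump from ``all limit points are stationary'' to convergence to a single $(\theta^\dagger,\phi^\dagger)$---and correctly note that the latter requires an isolated-critical-point or \L{}ojasiewicz-type assumption not implied by \textbf{(A1)}--\textbf{(A3)}.
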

\begin{proof}[Proof Sketch]
As $\Delta t\to0$, each $\hat{g}_{k}(\theta,\phi)$ becomes (approximately)
unbiased with bounded variance. By the classical Robbins--Monro argument
\citep{kushner2003stochastic,borkar2008stochastic}, the iterates
$(\theta_{k},\phi_{k})$ converge almost surely to a point where $\nabla J=0$.
\end{proof}
This result ensures that our parameter sequence converges to a stationary
point in the parameter space $(\theta,\phi)$. However, because $\pi_{\theta}$
and $C_{\phi}$ are encoded by neural networks, the objective surface
can be non-convex \citep{choromanska2015loss,goodfellow2016deep},
potentially admitting multiple local optima. Meanwhile, in classical
Merton settings with strictly concave utility, the continuous-time
control space $\bigl(\pi_{t},C_{t}\bigr)$ has a unique global
optimum for the original objective. Empirically, we observe that
for typical Merton parameters (e.g., CRRA utility), the learned neural
policy aligns closely with this global solution, indicating that
even though the parametric surface is non-convex, the algorithm tends
to find a near-global optimum in practice.

\subsection{Convergence Analysis for the Augmented Objective $\boldsymbol{\widetilde{J}(\theta,\phi)}$\label{sec:extended_normalization_convergence}}

We now show that introducing the adjoint-based regularization term
(Section~\ref{sec:normalization}) does not invalidate
the convergence guarantees established in Section~\ref{subsec:6.1_baseline_convergence}.
Recall the augmented objective: 
\[
\widetilde{J}(\theta,\phi)=J(\theta,\phi)-\mathcal{L}_{\text{align}}(\theta,\phi),
\]
where $\mathcal{L}_{\text{align}}$ penalizes deviations from Pontryagin-derived
controls $\bigl(\pi^{\mathrm{PMP}},C^{\mathrm{PMP}}\bigr)$. In classical
Merton settings with strictly concave utility, the global optimum
of $J$ also makes $\mathcal{L}_{\text{align}}$ vanish (i.e., no
deviation from the Pontryagin solution), so $\widetilde{J}$ and $J$
share the same unique global maximizer in the continuous-time sense.

Formally, we have 
\begin{equation}
\nabla_{(\theta,\phi)}\widetilde{J}(\theta,\phi)=\nabla_{(\theta,\phi)}J(\theta,\phi)-\nabla_{(\theta,\phi)}\mathcal{L}_{\text{align}}(\theta,\phi).\label{eq:grad_tilde_J}
\end{equation}

\paragraph{Key Assumptions for the Augmented Objective.}

As before, let $\hat{g}_{k}(\theta,\phi)$ be an (approximately) unbiased,
bounded-variance estimator of $\nabla J(\theta,\phi)$, as in (A2).
Now define $\hat{h}_{k}(\theta,\phi)$ analogously as an (approximately)
unbiased, bounded-variance estimator of $\nabla\mathcal{L}_{\text{align}}(\theta,\phi)$.
For convenience, we label the assumptions for $\widetilde{J}$ as
(B1)--(B3), mirroring those of (A1)--(A3):
\begin{enumerate}
\item[\textbf{(B1)}] \textbf{(Lipschitz Gradients for $\widetilde{J}$)} Suppose $\nabla\mathcal{L}_{\text{align}}$
is Lipschitz on the same compact domain $\Theta\times\Phi$. In particular,
if $\pi_{\theta}$ and $C_{\phi}$ each lie in a bounded set with
bounded derivatives, then $\nabla\mathcal{L}_{\text{align}}$ is Lipschitz.
Combined with (A1), it follows that $\nabla\widetilde{J}$ is also
globally Lipschitz.
\item[\textbf{(B2)}] \textbf{(Approximately Unbiased, Bounded-Variance Gradients for $\widetilde{J}$)}
We define 
\[
\hat{f}_{k}(\theta,\phi)=\hat{g}_{k}(\theta,\phi)-\hat{h}_{k}(\theta,\phi),
\]
mirroring \eqref{eq:grad_tilde_J}. As in (A2), each of $\hat{g}_{k}$
and $\hat{h}_{k}$ may exhibit a small bias term $\delta_{k}$ that
vanishes as $\Delta t\to0$. Hence, $\hat{f}_{k}$ is effectively
an unbiased estimator of $\nabla\widetilde{J}$ with uniformly bounded
variance (by some constant $\widetilde{B}>0$) in the limit of fine
discretization.
\item[\textbf{(B3)}] \textbf{(Robbins--Monro Step Sizes)} The step sizes $\{\alpha_{k}\}$
again satisfy 
\[
\sum_{k=0}^{\infty}\alpha_{k}=\infty,\quad\sum_{k=0}^{\infty}\alpha_{k}^{2}<\infty.
\]
This ensures a standard Robbins--Monro iteration for $\widetilde{J}$.
\end{enumerate}
Under these assumptions, the parameter update is 
\[
(\theta_{k+1},\phi_{k+1})=(\theta_{k},\phi_{k})+\alpha_{k}\hat{f}_{k}(\theta_{k},\phi_{k}).
\]

\begin{theorem}[Stationarity of $\widetilde{J}$] \label{thm:stationary_tilde}
Suppose \textbf{(B1)}--\textbf{(B3)} hold. Then, with probability
one, 
\[
(\theta_{k},\phi_{k})\;\xrightarrow[k\to\infty]{a.s.}\;(\theta^{\dagger},\phi^{\dagger}),
\]
where $\nabla\widetilde{J}(\theta^{\dagger},\phi^{\dagger})=0$. In
other words, $(\theta^{\dagger},\phi^{\dagger})$ is a stationary
point of the augmented objective $\widetilde{J}$. \end{theorem}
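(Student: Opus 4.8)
The plan is to reduce Theorem~\ref{thm:stationary_tilde} to the baseline result Theorem~\ref{thm:stationary} by showing that the augmented objective $\widetilde{J} = J - \mathcal{L}_{\text{normalize}}$ and its stochastic gradient estimator $\hat{f}_k = \hat{g}_k - \hat{h}_k$ inherit exactly the structural hypotheses (A1)--(A3) that powered the Robbins--Monro argument there. Concretely, I would first invoke linearity of differentiation, $\nabla\widetilde{J} = \nabla J - \nabla\mathcal{L}_{\text{normalize}}$ as in \eqref{eq:grad_tilde_J}, so that the whole question becomes: is $\widetilde{J}$ a smooth function whose gradient we can estimate in an (asymptotically) unbiased, bounded-variance way with Robbins--Monro step sizes? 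If so, the a.s.\ convergence to a zero of $\nabla\widetilde{J}$ is immediate from the same supermartingale/ODE machinery \citep{kushner2003stochastic,borkar2008stochastic} already used for Theorem~\ref{thm:stationary}.

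The three verifications are short. For the Lipschitz-gradient property: $\nabla J$ is $L_J$-Lipschitz by (A1) and $\nabla\mathcal{L}_{\text{normalize}}$ is Lipschitz by the hypothesis of (B1), so $\nabla\widetilde{J}$ is Lipschitz as a difference of Lipschitz maps on the compact domain $\Theta\times\Phi$. For unbiasedness and bounded variance: the bias of $\hat{f}_k$ is $(\mathbb{E}[\hat{g}_k]-\nabla J) - (\mathbb{E}[\hat{h}_k]-\nabla\mathcal{L}_{\text{normalize}})$, a difference of two terms each $\to 0$ as $\Delta t\to 0$; and $\mathbb{E}\|\hat{f}_k\|^2 \le 2\,\mathbb{E}\|\hat{g}_k\|^2 + 2\,\mathbb{E}\|\hat{h}_k\|^2 \le 2B + 2B_h =: B'$ by the elementary inequality $\|a-b\|^2\le 2\|a\|^2+2\|b\|^2$ together with the second-moment bounds in (A2) and (B2). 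The step-size condition (B3) is identical to (A3). With these in place the recursion $(\theta_{k+1},\phi_{k+1}) = (\theta_k,\phi_k) + \alpha_k\hat{f}_k$ is literally a stochastic-gradient iteration for the smooth function $\widetilde{J}$, and Theorem~\ref{thm:stationary} (applied with $\widetilde{J}$ in place of $J$) yields $(\theta_k,\phi_k)\to(\theta^\dagger,\phi^\dagger)$ a.s.\ with $\nabla\widetilde{J}(\theta^\dagger,\phi^\dagger)=0$.

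The genuinely nontrivial step --- and the one I would spend the write-up on --- is establishing the hypothesis buried in (B1), namely that $\mathcal{L}_{\text{normalize}}$ really has a Lipschitz gradient. Two points must be addressed. First, $\mathcal{L}_{\text{normalize}}$ depends on $(\theta,\phi)$ not only directly through $\pi_\theta,C_\phi$ but also through the Pontryagin targets $C^{\mathrm{PMP}} = (e^{\rho t_k}\lambda_k)^{-1/\gamma}$ and $\pi^{\mathrm{PMP}} = -\tfrac{\mu-r}{\sigma X_k}\,\tfrac{\lambda_k}{\partial_x\lambda_k}$, which involve a fractional power of the adjoint and a ratio with $\partial_x\lambda_k$ in the denominator; making these smooth with bounded derivatives requires the admissibility/boundedness conditions of Section~\ref{sec:Merton} (so $X_k$ stays in a compact subset of $(0,\infty)$), a uniform two-sided bound $0<\underline{\lambda}\le\lambda_k\le\overline{\lambda}$ (which follows from $\lambda_k=\partial J/\partial X_k$, the CRRA structure, and the wealth bounds), and a nondegeneracy bound $|\partial_x\lambda_k|\ge c>0$ --- the algorithmic analogue of the regularity hypothesis flagged in the Remark of Section~\ref{sec:adjoint_and_gradients}. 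Second, the penalty as written uses absolute values $|\cdot|$, whose gradient is not Lipschitz (indeed not defined) at zero; consistent with the paper's own description of a ``squared-distance term,'' I would state the penalty with smooth (squared or Huberized) distances so that $\mathcal{L}_{\text{normalize}}\in C^{1,1}$. Granting these conditions --- which I would fold into (B1) explicitly --- $C^{\mathrm{PMP}}$ and $\pi^{\mathrm{PMP}}$ become $C^1$ functions of $(\lambda_k,\partial_x\lambda_k)$ with bounded derivatives, these adjoint quantities are smooth in $(\theta,\phi)$ on the compact parameter set, and hence $\mathcal{L}_{\text{normalize}}$ has a Lipschitz gradient, closing the argument. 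I expect steps (i)--(iii) and the final Robbins--Monro invocation to be essentially immediate given Theorem~\ref{thm:stationary}, while this regularity bookkeeping for $\mathcal{L}_{\text{normalize}}$ is the main obstacle.
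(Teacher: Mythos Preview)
Your proposal is correct and follows essentially the same route as the paper's proof sketch: verify that (B1)--(B3) place $\widetilde{J}$ and $\hat{f}_k$ in exactly the Robbins--Monro framework of Theorem~\ref{thm:stationary}, then invoke that theorem with $\widetilde{J}$ in place of $J$. Your additional paragraph on the regularity of $\mathcal{L}_{\text{normalize}}$ (smoothing the $|\cdot|$, bounding $\lambda_k$ and $\partial_x\lambda_k$ away from zero) goes beyond what the paper actually proves---it simply \emph{assumes} (B1)---so that discussion is a welcome strengthening rather than a departure.
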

\begin{proof}[Proof Sketch]
By (B1), $\nabla\widetilde{J}$ is Lipschitz (as it is the sum/difference
of Lipschitz functions). By (B2), $\hat{f}_{k}$ is (approximately)
unbiased with uniformly bounded variance in the limit of $\Delta t\to0$.
By (B3), the step sizes satisfy Robbins--Monro conditions. Hence,
the argument from Theorem~\ref{thm:stationary} applies again, guaranteeing
that $(\theta_{k},\phi_{k})$ converges almost surely to a point where
$\nabla\widetilde{J}=0$.
\end{proof}
Theorem~\ref{thm:stationary_tilde} establishes that even after adding
the alignment penalty $\mathcal{L}_{\text{align}}$, the updated scheme
converges (a.s.) to a stationary point of $\widetilde{J}$. Intuitively,
this reflects a balance between maximizing the original Merton objective
$J$ and staying near the Pontryagin-based controls. In highly non-convex
neural-network models, multiple such stationary points may exist,
so the algorithm may converges to one of them in the parameter
space. However, in classical Merton settings with strictly concave
utility, there is a unique global optimum in continuous time
that makes $\mathcal{L}_{\text{align}}$ vanish. The presence of this
penalty typically helps steer the finite-dimensional parameter updates
more reliably toward that unique global maximizer, thus boosting numerical
stability. Overall, adding $\mathcal{L}_{\text{align}}$ 
neither disrupts the baseline convergence nor alters the fundamental
optimum in strictly concave Merton scenarios.

\section{Numerical Results\label{sec:num_test}}

In this section, we demonstrate our Pontryagin-guided neural approach
on the Merton problem with both consumption and investment,
distinguishing it from prior works that focus primarily on investment
decisions \citep[e.g.,][]{reppen2023deep_df,reppen2023deep_mf,dai2023learning}.
Incorporating consumption notably increases the dimensionality and
difficulty for classical PDE/finite-difference methods.

We consider a one-year horizon $T=1$ and restrict the wealth (state)
domain to $[0.1,\,2]$. The model parameters are $r=0.03$, $\mu=0.12$,
$\sigma=0.2$, $\varepsilon=0.1$, $\gamma=2$, and $\rho=0.02$.
Two neural networks $C_{\phi}$ and $\pi_{\theta}$ approximate the
consumption and portfolio policies, respectively, each taking $(t,X_{t})$
as input so that the entire state-time domain is handled by a single
pipeline without needing to stitch local solutions. Both nets have
two hidden layers (200 nodes each) with Leaky-ReLU activation. We
draw 10,000 initial samples $(t,X_{t})$ uniformly in the time-wealth
domain $\mathcal{D}=[0,1]\times[0.1,2]$. At each iteration, a fresh
batch is simulated in real time via Euler--Maruyama to prevent overfitting
to a single dataset and to ensure the policy remains adapted to the
evolving environmen.

We compare two variants of our scheme:

\begin{enumerate}[label=(\alph*)]
\item \textbf{PG-DPO}: This is the baseline algorithm (Algorithm~\ref{algo:pgdpo_no_reg}),
which uses the discrete-time gradient-based updates from Section~\ref{sec:discrete_algo}
without adding a Pontryagin alignment term. 
\item \textbf{PG-DPO-Align}: This variant adds the penalty $\mathcal{L}_{\text{align}}$
(cf. Section~\ref{sec:normalization}). After brief tuning,
we set $(\beta_{C},\beta_{\pi})=(10^{-3},10^{-1})$), noting that
too large a penalty can slow early training, while too small a penalty
yields minimal improvement. We also allow for different learning rates
for the two policy networks: $1\times10^{-5}$ for consumption, $1\times10^{-3}$
for investment. 
\end{enumerate}
Both methods are trained with the Adam optimizer for up to 100,000
iterations. At intermediate checkpoints (1k, 10k, 50k, 100k), we measure:
(1) the relative mean-squared error (MSE) between the learned
policy and the known closed-form solution, (2) an empirical utility
obtained by averaging the realized payoffs over 500 rollouts centered
around that iteration, to gauge practical performance.

Table~\ref{tab:pgdpo} summarizes the relevant error metrics
and empirical utility outcomes. From this table, we observe that \textbf{PG-DPO-Align}
often converges more stably or quickly, especially at high iteration
counts. 

\begin{table}[]
\centering{\small{}\caption{
\textbf{PG-DPO} vs. \textbf{PG-DPO-Align} in the Merton problem. We compare relative MSEs (consumption/investment) and empirical utility at various iteration milestones. PG-DPO uses step sizes of $1\times10^{-3}$ (investment) and $1\times10^{-5}$ (consumption), while PG-DPO-Align includes an alignment penalty $\mathcal{L}_{\mathrm{align}}$ with $(\beta_{C},\beta_{\pi})=(10^{-3},10^{-1})$ chosen after brief tuning. 
}
\label{tab:pgdpo} }%
\begin{tabular}{cccccc}
\toprule 
\addlinespace[1bp]
\multicolumn{2}{c}{{\small{}Iterations }} & {\small{}1,000} & {\small{}10,000 } & {\small{}50,000} & {\small{}100,000}\tabularnewline\addlinespace[1bp]
\midrule 
\addlinespace[1bp]
\multirow{2}{*}{{\small{}}%
\begin{tabular}{c}
{\small{}Rel. MSE}\tabularnewline
{\small{}(Consumption)}\tabularnewline
\end{tabular}} & {\small{}PG-DPO} & {\small{}3.14e+00} & {\small{}5.73e-01} & {\small{}1.39e-01} & {\small{}9.65e-02}\tabularnewline\addlinespace[1bp]
\cmidrule{2-6} \cmidrule{3-6} \cmidrule{4-6} \cmidrule{5-6} \cmidrule{6-6} 
\addlinespace[1bp]
 & {\small{}PG-DPO-Align} & {\small{}3.00e+00} & {\small{}3.75e-01} & {\small{}6.25e-02} & {\small{}3.46e-02}\tabularnewline\addlinespace[1bp]
\midrule 
\addlinespace[1bp]
\multirow{2}{*}{{\small{}}%
\begin{tabular}{c}
{\small{}Rel. MSE}\tabularnewline
{\small{}(Investment)}\tabularnewline
\end{tabular}{\small{} }} & {\small{}PG-DPO} & {\small{}2.39e-02} & {\small{}2.13e-02} & {\small{}7.85e-03} & {\small{}1.19e-02}\tabularnewline\addlinespace[1bp]
\cmidrule{2-6} \cmidrule{3-6} \cmidrule{4-6} \cmidrule{5-6} \cmidrule{6-6} 
\addlinespace[1bp]
 & {\small{}PG-DPO-Align} & {\small{}7.22e-02} & {\small{}1.57e-02} & {\small{}9.82e-03} & {\small{}8.43e-03}\tabularnewline\addlinespace[1bp]
\midrule 
\addlinespace[1bp]
\multirow{2}{*}{{\small{}}%
\begin{tabular}{c}
Empirical\tabularnewline
Utility\tabularnewline
\end{tabular}} & {\small{}PG-DPO} & {\small{}6.5420e-01} & {\small{}6.4795e-01} & {\small{}6.4791e-01} & {\small{}6.4791e-01}\tabularnewline\addlinespace[1bp]
\cmidrule{2-6} \cmidrule{3-6} \cmidrule{4-6} \cmidrule{5-6} \cmidrule{6-6} 
\addlinespace[1bp]
 & {\small{}PG-DPO-Align} & {\small{}6.5434e-01} & {\small{}6.4793e-01} & {\small{}6.4791e-01} & {\small{}6.4791e-01}\tabularnewline\addlinespace[1bp]
\bottomrule
\end{tabular}
\end{table}

\begin{figure}[ht!]
\centering \begin{subfigure}[b]{0.48\textwidth} \includegraphics[width=1\textwidth]{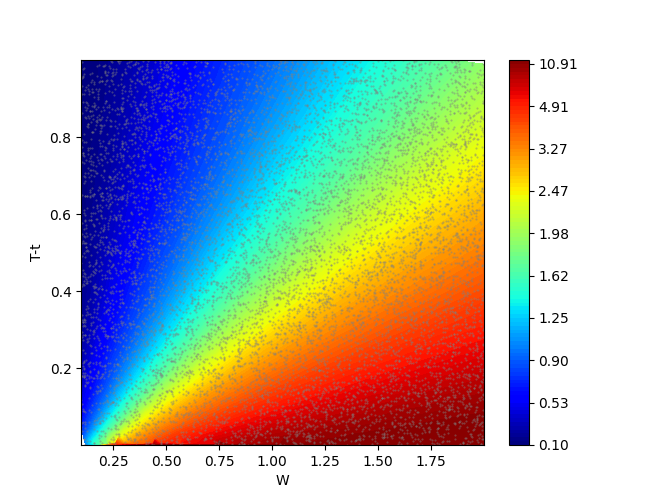}
\caption{Learned consumption policy}
\end{subfigure} \hfill{}\begin{subfigure}[b]{0.48\textwidth}
\includegraphics[width=1\textwidth]{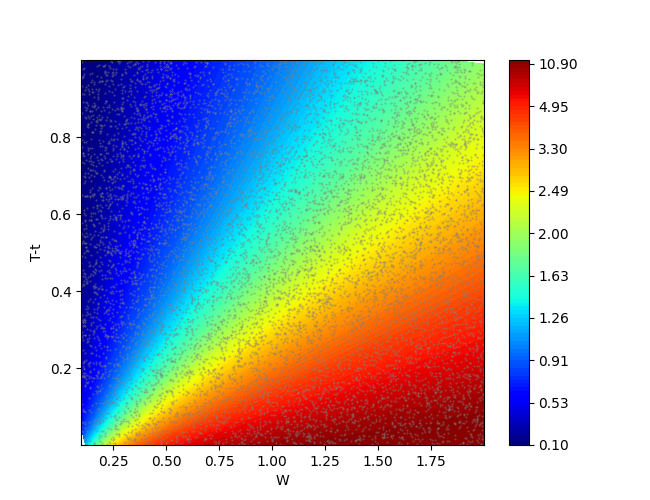} \caption{Exact consumption formula}
\end{subfigure}

\vspace{5pt}

\begin{subfigure}[b]{0.48\textwidth} \includegraphics[width=1\textwidth]{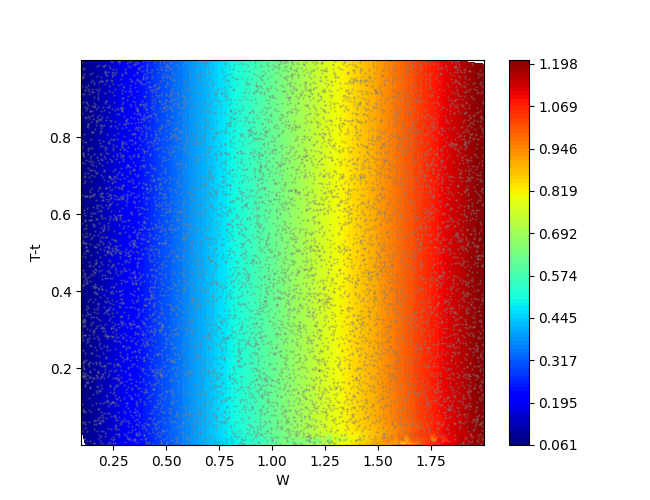}
\caption{Learned investment policy}
\end{subfigure} \hfill{}\begin{subfigure}[b]{0.48\textwidth}
\includegraphics[width=1\textwidth]{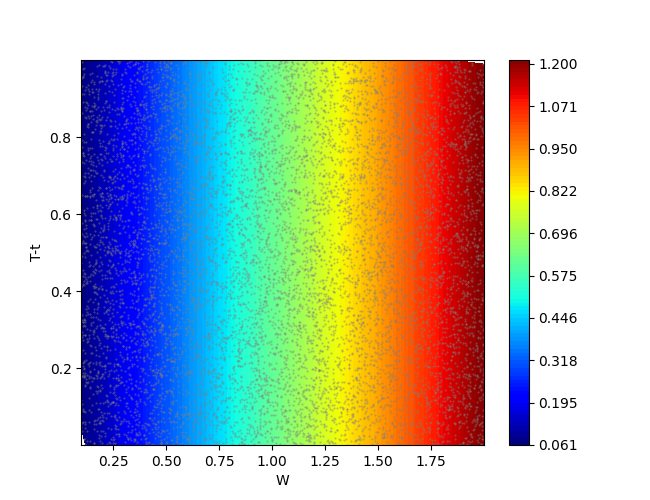} \caption{Exact investment formula}
\end{subfigure}

\caption{{\small{}Neural vs.\ exact solutions for consumption/investment under
the Merton model. The learned plots (left) are from our }\textbf{\small{}PG-DPO-Align}{\small{}
run at iteration 100,000; the exact plots (right) show the
closed-form Merton solution.}}
\label{fig:merton_c_pi} 
\end{figure}

Figure~\ref{fig:merton_c_pi} then compares the learned
policies (after 100k iterations of PG-DPO-Align) to the known closed-form
Merton solution. Despite the additional difficulty from incorporating
consumption, the proposed method converges closely to the theoretical
optimum.

In general, both \textbf{PG-DPO} and \textbf{PG-DPO-Align} eventually reach
high utility levels, with empirical utility converging near the theoretical
maximum. However, \textbf{PG-DPO-Align} yields lower MSE for both consumption
and investment at higher iteration counts (e.g., $3.46\times10^{-2}$ vs. $9.65\times10^{-2}$ 
in consumption MSE at 100k steps), suggesting that the alignment penalty
guides the policy more effectively toward Pontryagin solutions. We also observe
that the alignment variant often stabilizes faster, though the penalty
weights ($\beta_{C}$, $\beta_{\pi}$) must be set moderately to avoid
slowing early progress.

Overall, these results confirm that a Pontryagin-guided alignment
approach can improve both stability and final accuracy in learning
the Merton policy. While the naive gradient method (PG-DPO) ultimately
reaches comparable utility, PG-DPO-Align leverages the local Pontryagin
reference at each step, thereby achieving better performance in practice
and more faithfully matching the known closed-form solution.

\section{Conclusion\label{sec:conclusion}}

We have proposed a PG-DPO framework for Merton's portfolio problem
(Section~\ref{sec:BPTT}), combining neural-network'based
gradient methods with the adjoint perspective from PMP. By parametrizing
both consumption and investment policies within neural networks and
aligning each gradient step to approximate the continuous-time adjoint,
our approach balances theoretical rigor with practical flexibility.

A central design choice is to handle the entire time--wealth domain
by defining an extended value function over random initial nodes $(t_{0},x_{0})$.
This domain-aware sampling allows the policy to learn a truly global
solution rather than overfitting to a single initial state. After
discretizing only the local sub-interval from $t_{0}$ to $T$, we
apply single-path or mini-batch simulation to obtain unbiased gradient
estimates, ensuring that the learned policy covers a broad region
of $[0,T]\times(0,\infty)$.

In addition, we introduce an adjoint-based alignment penalty that
softly regularizes the policy toward Pontryagin's local controls.
Numerical experiments 
confirm that including this penalty can substantially improve stability
and reduce final policy errors, despite the extra cost of a second
autodiff pass to retrieve the suboptimal adjoint processes $\lambda_{t}$
and $\partial_{x}\lambda_{t}$. Our results on the Merton problem
with consumption show that the alignment-penalty variant (PG-DPO-Align)
outperforms the basic scheme (PG-DPO) in terms of convergence speed
and accuracy, while both ultimately reach near-optimal empirical utilities.

Although we do not guarantee global optimality in the high-dimensional
neural parameter space, the strict concavity of the Merton objective
ensures a unique global optimum in continuous time. Empirically, the
extended domain and alignment penalty help guide parameter updates
more reliably toward this Pontryagin-aligned solution, even in the
presence of non-convexities. 

Moreover, to the best of our knowledge, no existing deep-BSDE
or PINN approach addressing Merton's problem with both
consumption and investment in a single framework. By offering a Pontryagin-based
perspective, our PG-DPO method fills this gap and further demonstrates
how continuous-time control principles can enrich modern deep learning
approaches to stochastic optimization.

Further refinements could explore advanced variance-reduction techniques,
expand to higher-dimensional or constrained markets, or combine value-based
PDE elements with the adjoint-based insights. Overall, these findings
illustrate how continuous-time control principles can enrich modern
deep learning approaches to stochastic optimization. By embedding
the local costate viewpoint into a discrete-time training loop, the
PG-DPO framework provides an efficient path to near-optimal controls
for complex continuous-time finance problems and beyond.

\section*{Acknowledgments}

Jeonggyu Huh received financial support from the National Research
Foundation of Korea (Grant No. NRF-2022R1F1A1063371). This work was
supported by the National Research Foundation of Korea (NRF) grant
funded by the Korea government (MSIT) (RS-2024-00355646). 

\bibliographystyle{apalike2}
\bibliography{gf_bib}

\end{document}